\documentclass[12pt]{article}
\usepackage[letterpaper,margin=0.85in]{geometry}

\usepackage{graphicx} 
\usepackage{amsmath,amssymb,amsfonts}
\usepackage[utf8]{inputenc}

\usepackage{todonotes}
\usepackage{natbib}
\newcommand{\tr}[1]{{\rm tr }\left(#1\right)}   
\newcommand{\h}{{\mathfrak{h}}}

\newcommand{\mean}[1]{\mathbb{E}\left (#1\right )}

\newcommand{\norm}[1]{\left\Vert#1\right\Vert}  
\newcommand{\abs}[1]{\left\vert#1\right\vert}   
\newcommand{\set}[1]{\left\{#1\right\}} 

\newcommand{\flind}[1]{\mathfrak{L}\kern-8pt{-}(#1)}
\newcommand{\preflind}[1]{\mathfrak{L}_*\kern-12pt{-}\;\;(#1)}

\newcommand{\1}{\mathbf{1}}
\newcommand{\vN}[1]{\mathcal{#1}}

\newcommand{\seque}[3]{({#1}_{#2})_{#2\in #3}}

\newcommand{\Z}[1]{\mathbb{Z}^{#1}}
\newcommand{\R}[1]{{\mathbb R}^{#1}}
\newcommand{\N}[1]{{\mathbb{N}}^{#1}}

\newtheorem{thm}{Theorem}

\newtheorem{cor}{Corollary}

\newtheorem{prop}{Proposition}
\newtheorem{defi}{Definition}
\newtheorem{rem}{Remark}
\newenvironment{pf}{\noindent{\it Proof.} }{\;\fbox{}\\}

\newcommand{\w}[1]{\mathbb{W}_{#1}}
\newcommand{\pol}[2]{\mathcal{P}^{#1}(#2)}
\newcommand{\sca}[3]{\langle #1,#2\rangle_{#3}}

\title{Fitness, predictability and equilibrium in biological dynamics\thanks{\textbf{Funding.} This research has been supported by ANID grant 13220168 \textit{Biological and quantum Open System Dynamics: evolution,
innovation and mathematical foundations.}}}

\author{%
Rolando Rebolledo\thanks{CIMFAV - Instituto de Ingenier\'ia Matem\'atica, Universidad de Valpara\'iso, Valpara\'iso, Chile. General Cruz 222, Valpara\'iso, CP 2340000, Chile. \texttt{rolando.rebolledo@uv.cl}. Corresponding author.}%
\and
Mauricio Tejo\thanks{CIMFAV - Instituto de Estad\'istica, Universidad de Valpara\'iso, Valpara\'iso, Chile. Gran Breta\~na 1111, Playa Ancha, Valpara\'iso, CP 2340000, Chile. \texttt{mauricio.tejo@uv.cl}.}%
\and
Mara A. Freilich\thanks{Division of Applied Mathematics and Department of Earth, Environmental, and Planetary Sciences, Brown University, 69 Brown St, Providence, RI 02912, USA. \texttt{mara\_freilich@brown.edu}.}%
\and
Pablo A. Marquet\thanks{Facultad de Ciencias Biol\'ogicas, Pontificia Universidad Cat\'olica de Chile, Santiago 8331150, Chile; The Santa Fe Institute, 1399 Hyde Park Road, Santa Fe, NM 87501, USA; Instituto de Sistemas Complejos de Valpara\'iso, Valpara\'iso, Chile; Wissenschaftskollege zu Berlin, Wallotstrasse 19, 14193 Berlin, Germany. \texttt{pmarquet@uc.cl}.}%
}

\begin{document}

\maketitle

\noindent \textbf{Keywords:}
fitness, general theory of stochastic processes, diffusion processes, orthogonal polynomials, equilibrium, moment estimators, maximum likelihood estimators.

\noindent \textbf{MSCcodes:}  
60G07,60H30,60F17,60J70,33C45,92B05,62M15.

\begin{abstract}
Understanding eco-evolutionary dynamics presents a number of mathematical challenges, among which the role of predictability and equilibrium in affecting eco-evolutionary dynamics is paramount. However, we are far from achieving an integral mathematical representation of the role of predictability, in part because the fundamental concept of fitness and its dynamics lack a general representation.
At present, Open System Dynamics theory provides a suitable methodological approach to address these challenges, starting from Stochastic Analysis, which appears to be one of their best contemporary representations. Here, we focus on the \textit{concept of fitness} and provide a mathematical theoretical framework to characterize the history of fitness change and the relationship between fitness and \textit{predictability}. 
We illustrate the use of this framework and explore the limiting behavior of biomass abundances modeled by stochastic processes in both discrete time and continuous time counterpart after rescaling (diffusion). In doing so, we focus on the consequences of neutrality in the search for equilibrium. In particular, the modeling of diffusions considers two kinds of basic settings: for relative abundances, our main processes take values on a $d$ dimensional simplex, where orthogonal polynomials played a central role in constructing such models; for non proportional abundances, these processes take values in the space $\mathbb{R}^d_+$ of vectors with positive components, in which we focus on two remarkable families of models. The paper concludes with statistical inference, presenting numerical examples for the case of relative abundances where orthogonal polynomials play a special role when estimating the relative abundance parameters under a maximum likelihood method. 
\end{abstract}

\maketitle

\section{Introduction}

The concept of fitness is fundamental to our understanding of ecological and evolutionary dynamics and is at the core of the principle of natural selection, as variability in fitness is mandatory for selection to occur \citep{orr2009fitness, wadgymar2024defining}. Fitness is associated with the information carried by different genetic variants, which is expressed in measurable phenotypic traits that can affect the performance of a given biological entity in a given environment. One of the difficulties of the concept is that it is nearly impossible to predict what the best combination of genes, and thus traits, is that maximizes fitness given a particular environment, implying that there is no way to measure fitness in absolute terms or as a distance from a reference or maximum attainable fitness value. The fitness concept is thus a relative one; relative to the performance of other individuals in the same environment, usually relative to the one with the highest fitness. The second issue is how to measure performance. Originally, and thanks to Herbert Spencer, natural selection was identified with the survival of the fittest, which led to circularity whereby the fittest are those that survive. This has led some authors to envision the entire theory of evolution by Natural Selection as tautological and metaphysical \cite{popper1978natural, manser1965concept}.  A more precise definition, originally emphasized by Darwin, was the number of descendants that each organism produced over its lifetime, but this is still circular: those organisms or types that achieve the greatest reproductive success do so because they are the ones that achieve the greatest reproductive success \citep{brandon1978adaptation}. This is also \textit{a posteriori} measurement of the ability of each organism to create a lineage that survives through time, which may be a practical metric for microbial populations but would be difficult to measure in long lived macroorganisms. Furthermore, it would be difficult to interpret since an organism may experience different environments during its lifetime \cite{jops2023life}, and the same number of offspring could imply different fitness values, and similarly for the age of the parents when producing offspring; fitness is not a fixed number or scalar, as highlighted by Van Valen \citep{VanValen}; it is a result of a process where spatial and temporal heterogeneity and stochasticity play an important role; Van Valen \citep{VanValen} argued that fitness is better represented as a vector, implicitly requiring knowledge of a history of interactions between the observed entity and its environment. The present contribution proposes an open dynamical system framework that takes this requirement seriously, by means of the general theory of processes.

Resolving these difficulties requires an \textit{a priori} way of assessing fitness. This, however, is impossible as it would require specifying the fitness value of all possible combinations of genes and phenotypes in a given environment. An influential response has been to turn to probabilities: treating fitness as a disposition to exhibit different reproductive outcomes with some probability in a given environment, i.e., a probability distribution over offspring number. This is the classical propensity interpretation of fitness \cite{mills1979propensity}, traceable to Fisher's claim for indeterminism \citep{fisher1934indeterminism,suarez2022complex}``If we take the view that all laws of natural causation are essentially laws of probability, we are only recognizing in theory what we have always known to be true in our daily affairs.''

The propensity interpretation, though not without its critics (see \cite{beatty1989rethinking,brandon1990adaptation,sober2001two,abrams2009unity, suarez2022complex}), inaugurated a probabilistic, albeit simple, view of fitness. Later attempts to specify a more complete probabilistic model 
of fitness have tended to be phenomenological \cite{pence2013new}, without identifying the drivers of the stochastic process and its mathematical representation. 

A separate, largely independent line of work has sought quantitative, model-based definitions of fitness for use directly in ecological and evolutionary dynamics. Adaptive dynamics theory identifies fitness with the dominant Lyapunov exponent of a phenotype's population-growth process in a given environment, and, for evolutionary analysis, evaluates its invasion fitness in the environment generated by a resident population at its ecological attractor \cite{metz1992should}. This approach is powerful precisely because it is model-dependent: fitness is defined only relative to a specified ecological scenario, which sidesteps rather than resolves the search for a fully general definition. Stochastic extensions of the Price equation take a complementary route, treating individual fitness itself as a random variable and showing that the entire distribution of fitness values, in addition to both its mean and its variance, contribute to directional evolutionary change \cite{rice2008stochastic}; this is close in spirit to the emphasis, developed in the following, on the interplay between the deterministic (closed or isolated system dynamics) and stochastic (open system dynamics) components of fitness, although it is built within a different mathematical formalism — the Price equation rather than the general theory of stochastic processes.

We aim to offers a definition of fitness that applies uniformly across levels of biological organization and across the discrete- and continuous-time settings, which is the gap we aim to fill here. This has important implications for model selection, parameter inference, and relating variance to \cite{azaele2006dynamical,rottjers2018hairballs,patterson2021and}. This task requires moving beyond the adaptive-dynamics and stochastic-Price-equation frameworks to a more general treatment of biological information in the context of stochastic processes.

Our aim, then, is to propose and justify a definition of fitness as a stochastic process driven by the ability of organisms — or, more generally, of a biomass flow — to predict their environment using the history of the process itself, i.e., the accumulated history of its interactions with the environment. To do this, we make use of the so-called General Theory of Processes \cite{meyer2006guide}, which is well suited to mathematically represent the dynamics of open systems. The analysis of these dynamics starts by considering stochastic processes as \textit{observable quantities}, while the \textit{state} of the given system corresponds to the joint probability distribution of those stochastic processes, defined on the set of all their trajectories. 

A terminological point is worth flagging at the outset to avoid an easy confusion. “Predictability” is used throughout this paper in a specific, well founded mathematical sense: a process is predictable if its value at time t is already known at time t-, i.e., it is measurable with respect to the history strictly before t (Section 2.1.3). This is not the same notion of “predictability” employed in the literature on bet-hedging and phenotypic switching, where the term refers instead to the statistical structure, the autocorrelation or the entropy rate of environmental fluctuations, and where a formal relationship has been established between an organism's long-term population growth rate and the amount of information available to it about a fluctuating environment \cite{donaldson2010fitness, kussell2005phenotypic,rivoire2011value}. A third, again distinct, sense of discounting-by-volatility appears in sequential Bayesian models of evidence accumulation in changing environments \cite{veliz2016stochastic}: there, an ideal observer's belief — a log-likelihood ratio — evolves as a nonlinear stochastic differential equation (SDE) whose drift is itself shown to equal a Kullback–Leibler divergence between the observation distributions of the candidate states, and whose nonlinear restoring term discounts older evidence at a rate set by the rate of environmental change, producing a nondegenerate stationary belief distribution where none would exist in a static environment. 

In what follows, we first provide a pedagogical introduction to key concepts in open systems and the general theory of processes (\S 2). We then derive treatment of fitness in biological system both continuous and discrete systems in the framework open system dynamics on the simplex in \S 3 before examining specific examples in \S 4. In \S 5 we demonstrate how this framework provides a new perspective on the parameter inference problem in population dynamics. We discuss the results in \S 6. 

\section{A short introduction to Open Systems and the  classical General Theory of Processes}\label{sec:fitness-in-GTP}
Within this section, we discuss a concept of fitness based on the general theory of processes, as stated in the foundational work of Claude Dellacherie \citep{Dellacherie-1972} and developed further in \citep{Dellacherie-Meyer}. The theory has been synthesized later by various authors (see, e.g. \citep{Protter}).

Motion is our fundamental principle for understanding life. Throughout history, humans have tried to mathematically represent motion through different approaches. The nineteenth century was particularly intrigued by an inexplicable motion discovered by the Botanist Robert Brown in 1827 \cite{brown1828xxvii}; the erratic motion of pollen in suspension in a liquid, which he demonstrated for many species, using live and dead (herbarium) individuals, and then extended to inorganic material of all sorts. These observations went unexplained for almost 80 years most likely because they were inconsistent with Newton's mechanics. It is well known that Einstein gave a suitable explanation of this phenomenon in 1905; previously, in his 1900 ``Doctorat d'État'' thesis on the Theory of Speculation, Bachelier  provided another interpretation inspired by market dynamics. Indeed, in addition to the fact that Einstein's work has shown the limits of Newtonian mechanics, the key idea in both approaches to Brownian motion was to go beyond the closed-system approach to understand natural phenomena, whereby the system components undergo deterministic dynamics. Similar conclusions were obtained in logic by  G\"odel with his celebrated result on the logical incompleteness of arithmetic (and thus of mathematics). Newton's method, consisting of isolating the motion of a part of nature to produce a (causal) theory of it, is at the heart of the closed-system approach paradigm. Therefore, motion resists isolation. This determines a mathematical challenge consisting of exploring the best way to represent open-system dynamics in nature \cite{Rebolledo:2019th}.

The basic motion structure is the flow: flow of biomass, flow of energy, electromagnetic waves, light, etc. The  interaction between flows corresponds to \textit{ events}. Events reflect the interaction of different types of motion in nature. To understand a given type of motion, one needs a suitable analytic methodology which could lead to appropriate mathematical modeling. That has been the challenge of Probability and Stochastic Analysis. 

The twentieth century provided various foundation proposals for Probability Theory. Among them, Kolmogorov's celebrated axiomatic theory based on measure theory (inherited from Lebesgue and Borel, among other mathematicians) was noteworthy. In parallel, von Neumann's work on the foundations of Quantum Mechanics introduced another theory of probability, a non-commutative one, that includes Kolmogorov's as a particular case. Stochastic processes then appeared as a representation of flows, or basic movements of energy, biomass, and any other form of motion. The contemporary discipline that studies their properties is now called stochastic analysis. We may reserve the adjective classical for the first version of stochastic analysis that uses Kolmogorov probabilities. This article aims to provide an interpretation of fitness within that framework.

The analysis of motion in an open system starts by distinguishing \textit{observables} from \textit{states}. An observable is a motion that we can transform and control directly, while the environment contains an undetermined number of different flows, some of which we become aware of (or detect) only indirectly. In stochastic jargon, the latter are called \textit{noise}, while the flow of observables defines the \textit{main dynamics}. The main dynamics and the noise are mathematically represented by stochastic processes. Therefore, both types of flows give rise to a stochastic process that describes the entire dynamics of the open system. The \textit{state of the system} is the probability distribution (in an infinite-dimensional space) of that process, and its distribution rules the interactions between different flows of the main system and the environment. As we explain below, in this context, fitness is a measure of the predictability of the motion based on its past history of interactions with the environment. In this article, our dynamic process will refer to the abundance of biomass, in which we will analyze its dynamics both in relative  and absolute  terms in detail in Sections \ref{sec:relative-abundance} and \ref{sec:ab-ab}, respectively.

\subsection{An overview of basic elements of the General Theory of Processes}
In this section, we will provide some basic definitions and outline the general stochastic framework in which our approach is embedded. The basic mathematical structure of the General Theory of Processes consists of a classical probability space $(\Omega,\vN{F},\mathbb{P})$ and a \textit{history} or \textit{filtration} $\mathbb{F}=\seque{\vN{F}}{t}{T}$ where $T$ is an ordered set (e.g. non-negative integers $\Z{}_+$ or $\N{}$, or non-negative real numbers $[0,\infty[=\R{}_+$), each $\vN{F}_t\subset\vN{F}$ is a $\sigma$ field and $\vN{F}_s\subset\vN{F}_t$ for $s,t\in T$ such that $s\leq t$. In Classical Probability Theory, events (flows interactions) are characterized as sets belonging to $\sigma$-fields. So, it is quite natural to consider increasing families of $\sigma$-fields mimicking a given history. It is also assumed that the history is continuous on the right, that is, $\vN{F}_t=\bigcap_{s>t}\vN{F}_s$, and any $\vN{F}_t$ contains all sets of probability zero for all $t$ (Dellacherie standard conditions). The regularity conditions of functions play an important role in this theory. Let us recall some basic facts about calculus. A real function $f$ has left-hand limits (respectively, right-hand limits) at $t\in\R{}$ if the limit $\lim_{s\to t; s<t}f(s)$, denoted $f(t-)$, (resp., $f(t+)=\lim_{s\to t; s>t}f(s)$ exists. A function is said to be continuous on the left (respectively, continuous on the right) if $f(t)=f(t-)$ (resp., $f(t)=f(t+)$). The function is said to be cadlag (from French ``continue à droite avec des limites à gauche'') if it satisfies $f(t)=f(t+)$ and $f(t-)$ exist. The set of all these functions is denoted $\mathbb{D}=D([0,\infty[,\R{})$. Similarly, the left-continuous functions are those for which $f(t)=f(t-)$ for all $t\in [0,\infty[$. A stochastic process $X:\Omega\times [0,\infty[\to \R{}$  is said to be \textit{adapted} to the history $\mathbb{F}=\seque{\vN{F}}{t}{[0,\infty[}$ if for each $t\in [0,\infty[$, the map $\omega\mapsto X(\omega,t)$ is $\vN{F}_t$ measurable. The relationship of an adapted process $X$ with $\mathbb{F}$ is particularly significant when modeling biomass flows, as $X$ represents the observed flow and $\mathbb{F}$ contains the information obtained through our observations of the process. Knowledge evolves through experiments, new observations with new adaptive innovations or more sophisticated instruments as is the case of humans, and the discovery of new biological events that are added at each step to the history $\mathbb{F}$. As a result, the main system model uses processes $X$ that are adapted to $\mathbb{F}$ and such that the trajectories $t\mapsto X(\omega,t)$ are in $\mathbb{D}$ for any $\omega\in\Omega$. In fact, the processes that represent the dynamics of flows in the main system have a richer structure. They are semimartingales that satisfy stochastic differential equations. A semimartingale $X$ is a stochastic process that admits a decomposition $X=F+M$ where $F$ is an adapted process with trajectories of finite variations\footnote{That is, they can be written as $F=B-D$, where $B$ and $D$ are two increasing processes.} in any finite interval of the real line, and $M$ is a local martingale (or noise) \citep{Dellacherie-Meyer,Protter}. Processes with finite variations represent what we directly measure through observations, whereas martingales are only indirectly observed; one needs an additional instrument to record interactions with those flows. In our case, they represent anything in the environment that could affect fitness and that we are not yet including in the representation of fitness. 
\subsubsection{Discrete time dynamics}
When $T=\N{}$, any adapted process $X=\seque{X}{n}{\N{}}$, such that $\sup_n\mathbb{E}(\abs{X_n})<\infty$ can be written in the following form:
\begin{equation}\label{decomposition-1}
    X_n=X_0+X^p_n+M_n,
\end{equation}
where $M$ is a martingale, that is, $\mathbb{E}(M_n|\vN{F}_{n-1})=M_{n-1}$, $M_0=0$ and $X^p$ is given by
\begin{equation}\label{decomposition-2}
    X^p_n=\sum_{k=1}^n\mathbb{E}\left(X_k-X_{k-1}|\vN{F}_{k-1}\right),
\end{equation}
for all $n\in\N{}$. 
Note that $X^p_n$ is $\vN{F}_{n-1}-measurable$. We say that $X^p$ is predictable at time $n$ with the information one has at time $n-1$. In practice, $X^p$ is observed and $X=X^p+M$, contains the noise or non-directly observed part $M$.

Let denote $\Delta X_k=X_k-X_{k-1}$, for all $k=1,\ldots, n$. So that $$X^p_n=\sum_{k=1}^n\mathbb{E}\left(\Delta X_k|\vN{F}_{k-1}\right).$$ 
Following a customary notation in Analysis, given any real number $a$, we write $a^+=\sup (a,0)$ its positive part, while $a^-=\sup(-a,0)$ is its negative part, so that $a=a^+-a^-$. Returning to \eqref{decomposition-2}, notice that we may write $X^p_n$ as a difference of two increasing processes, $X^p=B_n(X)-D_n(X)$, where
\begin{eqnarray}
    B_n(X)&=&\sum_{k=1}^n(\mean{\Delta X_k|\mathcal{F}_{k-1}})^+, \;\text{predictable birth}\\
    D_n(X)&=&\sum_{k=1}^n(\mean{\Delta X_k|\mathcal{F}_{k-1}})^-,\;\text{predictable death.}
\end{eqnarray}

 The process $X^p$ represents the flow of predictable birth and death of biomass based on previous observations, while the relationship of this flow with the unknown environment is represented by $M$. $M$ is indirectly detected through the increasing process $\langle M\rangle$, interpreted as the observed loss of energy or information, which also depends on the history of our knowledge.   The dissipation of energy measured under historical information $\mathbb{F}$ is
\begin{eqnarray}\label{volatility}
    \langle M\rangle_n&=&\sum_{k=1}^n\mathbb{E}\left((M_k-M_{k-1})^2|\vN{F}_{k-1}\right)\nonumber\\
    &=&\sum_{k=1}^n\left[\mathbb{E}\left ((\Delta X_k)^2|\vN{F}_{k-1}\right)-\left(\mathbb{E}(\Delta X_k|\vN{F}_{k-1}\right)^2\right],\\
    &=&\sum_{k=1}^n\left[\mathbb{E}\left ((\Delta X_k)^2|\vN{F}_{k-1}\right)-\left(\Delta B_k(X)-\Delta D_k(X)\right)^2\right]\nonumber
\end{eqnarray}
this is a predictable process, and it satisfies that $M^2-\langle M\rangle$ is a martingale. Incidentally, the process $\langle M\rangle$ has been denoted $\langle M,M\rangle$ and called ``predictable associated increasing process of the martingale $M$'' by Dellacherie and P.A. Meyer \cite{Dellacherie-Meyer}. We will prefer to call this process the\textit{ predictable energy dissipation (volatility) associated with the flow} $X$ and denote it by the symbol $V^p(X)=\langle X-X^p\rangle$.

Therefore, the couple of $\mathbb{F}$ predictable processes $(X^p,V^p(X))$ is essential to understand how the motion of biomass is adapted to its habitat or environment. That is, they are the basis for an open-system approach to the concept of fitness. They are predictable characteristics of biomass motion, according to the history of previous observations.

\subsubsection{Discrete time Markov chain}
To better understand the concept of predictability with respect to a given history $\mathbb{F}$, consider a simple yet general example of discrete time modeling. Assume that $X_n$ represents the value of a proportion of biomass at time $n$. So $X_n$ takes values in $[0,1]$. In addition, assume that $\seque{X}{n}{\N{}}$ is a Markov chain with values in a countable set $S\subset [0,1]$, say $S=\mathbb{Q}\cap [0,1]$ the rational numbers in $[0,1]$ and initial probability $q$. We write $q(A)=\sum_{x\in A}q_x$, where $q_x=q(\set{x})$. Let us denote $Q$ as its transition probability: 
\[Q(x,y)=\mathbb{P}(X_{n+1}=y\vert X_n=x).\] 
Given any measurable bounded function $f$ defined on $[0,1]$, let us denote $$Qf(x)=\sum_{y\in S}Q(x,y)f(y).$$ Moreover, given any probability measure $\mu$ in $[0,1]$, we define the action of $Q$ in $\mu$ as the measure $$\mu Q(A)=\sum_{x\in S}\sum_{y\in A}\mu_xQ(x,y).$$ So, given the initial probability $q$ of the Markov chain, one has $\mathbb{E}_q{f(X_1)=qQf}$. Also, if $R$ is another transition probability, we denote $RQ$ the transition probability $RQ(x,y)=\sum_{z\in S}R(x,z)Q(z,dy)$ (a product of matrices) . In particular, if $I$ denotes the identity function, $I(x)=x$, then $QI(x)=\sum_{y\in S}Q(x,y)y$. On the other hand, $Q^n(x,y)$ corresponds to
\[Q^n(x,y)=\mathbb{P}(X_n=y|X_0=x),\]
and, if the chain starts with an initial probability $q$,
\[qQ^nI=\mathbb{E}_q(X_n).\]
We adopt here the notation $\mathbb{E}_x$ in computations that involve conditional expectations when the chain starts with the initial probability $q=\delta_x$, the Dirac measure supported by $x\in S\subset [0,1]$.

Now consider the history generated by the observation of the process $X_n$ itself, that is, $\vN{F}_n=\sigma (X_0, X_1,\ldots,X_n)$, for any  $n\in\N{}$. Then, the computation of conditional expectations in terms of the transition probability $Q$ gives
\begin{eqnarray}
 \mathbb{E}_x\left(X_k-X_{k-1}|\vN{F}_{k-1}\right)&=&\mathbb{E}_x\left(X_k|\vN{F}_{k-1}\right)-X_{k-1}\nonumber\\
 &=&(Q-I)(X_{k-1}).
\end{eqnarray}
Call $L=Q-I$ the generator of the Markov chain; then, for any measurable bounded function $f$ defined on $[0,1]$, one obtains $Lf(y)=Qf(y)-f(y)$. In particular,
\begin{equation}
X^p_n=\sum_{k=1}^nL(X_{k-1}).
\end{equation}

Given any other bounded measurable function $g$ on $[0,1]$ we introduce the so called ``carré du champs'' operator
\begin{equation}
    \Gamma (f,g)=(Q-I)(fg)-f(Q-I)g-g(Q-I)f
\end{equation}

And for $f=g$, $\Gamma (f,f)=(Q-I)(f^2)-2f(Q-I)f=Qf^2-2fQf+f^2$, so that for $f(x)=x$, the formula \eqref{volatility} gives:
\begin{equation}
    V^p(X)_n=\sum_{k=1}^n\Gamma (X_{k-1},X_{k-1})=\sum_{k=1}^n\left(QX_{k-1}^2-2X_{k-1}QX_{k-1}+X_{k-1}^2\right).
\end{equation}
\subsubsection{Continuous time dynamics}
Going further, we summarize the concept of predictability in continuous time. Predictable processes have been studied in general for $T=[0,\infty[$ by Dellacherie in his fundamental book \citep{Dellacherie-1972}. A pedagogical treatment can be found in \citep{Protter} among many more recent books. Let us briefly recap this notion. Call $\mathbb{L}$ the set of all the left-continuous real functions, and consider in $\Omega\times [0,\infty[$ the $\sigma$--field $\vN{P}(\mathbb{F})$ generated by all processes $(\omega,t)\mapsto Z(\omega,t)$ that are adapted to history $\mathbb{F}$ and whose trajectories $t\mapsto Z(\omega,t)$ belong to $\mathbb{L}$. This is the \textit{predictable} field $\sigma$ in the product space $\Omega\times [0,\infty[$, and any process $Y$ satisfying the two-variable function $(\omega,t)\mapsto Y(\omega,t)$ is said to be $\vN{P}(\mathbb{F})$--measurable. In particular, all adapted continuous processes are predictable. However, this definition is too broad for our purposes since we want to predict facts or events that reflect the adaptability of the main system biomass; that is, we want to filter the noise. 

The important result that we will use in what follows is a very particular case of the existence of a predictable projection of an adapted process with finite variation on any finite real interval. Assume that $F$ is such a process (so it can be written as the difference of two increasing processes) and, in addition, that $\sup_t\mathbb{E}(\abs{F_t})<\infty$. Then there exists a unique predictable process $F^p$, such that $R=F-F^p$ is a martingale. So, if $X=F+N$ is a semi-martingale, with $F$ a process of finite variation and $N$ a martingale, $X$ can be written as $X=F^p+(N+R)$, where $F^p$ is predictable and $M=N+R$ is another representation of noise. The decomposition of semimartingales as the sum of a finite variation process $F$ and a martingale $M$, is not unique unless $F$ is predictable. Special semimartingales are those for which there exists a unique decomposition with a predictable process $F$  (see \cite{Dellacherie-Meyer}). That is the case for processes $X$ that are \textit{locally integrable}, which is particularly true for continuous, adapted processes; for instance, diffusion processes. Assume that $X$ is a special semimartingale, so it has a unique decomposition as $X=F+M$, where $F$ is a predictable process with finite variations. That is, $F$ as a finite-variation process can be written as the difference of two increasing processes: $F=B-D$, where $B$ represents (predictable) birth and $D$, (predictable) death. We denote this process by $X^p$. On the other hand, the noise $M$ in continuous time has no finite variations (it is not differentiable) it represents the interrelation of the observed main system with the environment and is only detected by the predictable loss of adequate physical magnitudes related to the phenomenon (energy, information). This dissipation is represented, as in the discrete time case, by the so-called \textit{associated increasing process of martingale} $M$, which we call $V^p(X)$, customarily denoted $\langle M\rangle$ (or $\langle M,M\rangle$), characterized by the property that $M^2-\langle M\rangle$ being a martingale. Finding the state (or probability distribution) of the noise has been a crucial problem in modeling open system dynamics by means of the GTP. In particular, the scientist working with discrete time data may easily identify birth and death of biomass, but it is a tough problem to characterize the distribution of the noise without including an additional hypothesis of Markovian behavior. Levy's theorem that characterizes Brownian motion through $V^p(X)=\langle M\rangle$ provides a key to solve that problem when one introduces time changes and uses limit theorems to build up a continuous-time model. This has been done through Central Limit Theorems (\cite{CLT-1980},\cite{CNS-CLT1980}) and the approximation of diffusion processes (\cite{Rebolledo:1979vk},\cite{Platen:1985vl}). 
\subsubsection{Defining fitness}
Here, we present our proposed definition of fitness in light of the General Theory of Stochastic Processes.

\begin{defi}
    The fitness of the biomass flow $X$ with respect to the history $\mathbb{F}$ is the couple of predictable processes $\Phi(X|\mathbb{F}):=(X^p,V^p(X))$, such that $X-X^p$ is a martingale. We call $X^p$ the \textit{predictable fitness} and $V^p(X)$ the \textit{predictable fitness volatility}.
\end{defi}
This structure allows us to model open dynamical systems under the assumption that observations do not perturb the object under study. Otherwise, one needs to use noncommutative probability theory. 

\subsection{Equilibrium and reversibility}
Another important concept in ecological models related to fitness concerns the notion of equilibrium and its difference from stationarity. Consider, for instance, a Markov chain with values in a countable set $S$ contained in $[0,1]$ with transition probability $Q$ as before. Since $S$ is countable, any probability measure $q$ on $S$ can be written as $q(A)=\sum_{x\in A}q(x)$ for all $A\subset S$, where $q(x)=q(\set{x})$, $x\in S$. One says that an initial probability $q$ is stationary or invariant if 
\begin{equation}\label{stationary}
    qQ=\sum_{x\in S}q(x)Q(x,y)f(y)=\sum_{x\in S}q(x)f(x),
\end{equation}
for all bounded measurable functions $f$. That is,
\[qL=0,\]
where $L(x,y)=Q(x,y)-I(x,y)$, $I(x,y)=\delta_{x,y}$.

This means that starting its motion with probability $q$, the observable $X_n$, at any time $n$, always has the same probability law $q$. We call this property the \textit{global balance}. 

Now, assume that $Q$ is a Markov transition matrix, and take any time $N>0$. The question is whether the reversed process $X^{\leftarrow}_n=X_{N-n}$ has the same probability law as that of $X_n$ for all $n\leq N$. Models of equilibrium statistical mechanics (like the Ising model simulated via Glauber dynamics) rely on reversible chains. Non-reversible chains are used to model \textit{non-equilibrium} systems (like fluid flow or driven diffusive systems). To achieve reversibility, one needs an additional property, stationarity is not sufficient, but it is necessary. We need a notion of \textit{local balance} of the Markov chain, which we illustrate below. 

So, from \eqref{stationary}$q$ is stationary if and only if 
\begin{equation}
    qQ(y)=\sum_{x\in S}q(x)Q(x,y)=q(y),
\end{equation}
for all $y\in S$. Since $x\mapsto q(x)$ is a function, one may apply $Q$ ``by the left'' to $q$ and one get:
\[Qq(y)=\sum_{x\in S}Q(y,x)q (x)=\sum_{x\in S}q(y)Q(y,x).\]

As a result, the chain is reversible if and only if it satisfies the detailed balance condition 
\begin{equation}
    q(x)Q(x,y)=q(y)Q(y,x),
\end{equation}
for all $x,y\in S$.
That is, intuitively, given two points $x,y\in S$, the probability of going to $y$ starting from $x$ with probability $q (x)$ is the same as reversing the motion from $y$ to $x$ starting with probability $q (y)$.

Finally, the net rate of probability flow -also called \textit{probability current}-between $x,y\in S$ is defined by the matrix $J=(J(x,y))_{x,y\in S}$:
\begin{equation}
    J(x,y)=q(x)Q(x,y)-q(y)Q(y,x),
\end{equation}
so that $q$ is an equilibrium probability if and only if $J=0$.
\subsection{Rescaling and limit behaviour}
It is well known that handling finite difference equations is harder than dealing with differential equations. The same is true in our ecological framework. So, as soon as observations provide a time series of data, one looks to implement a continuous time representation of it, even better if the whole dynamics are successfully written as a stochastic differential equation. To do that, one introduces a rescaling procedure and looks for the limit behavior of the rescaled biomass process.

We start with a discrete time representation of biomass evolution, $X=\seque{X}{n}{\N{}}$, with fitness $X^p$ and predictable fitness volatility $V^p(X)$. We rescale time through a couple of sequences $(\tau_N,\theta_N)_{N\in\N{}}$ where $\tau_N\in\N{}$ and $\theta_N:[0,\infty[\to\N{}$, so that we obtain a sequence of continuous time processes $t\mapsto Y_N(t)=\dfrac{1}{\tau_N}X_{\theta_N(t)}$, ($t\in [0,\infty[$). For instance, one may take $\theta_N(t)=[Nt]$ for all $0\leq t<\infty$, and $[\cdot]$ is the customary notation for the integer part of the real number $Nt$: $[Nt]:=\sup\set{n\in\N{}:\;n\leq Nt<(n+1)}$, and $\tau_N=N$, or $\tau_N=\sqrt{N}$. As a result, we get the following sequences of rescaled processes as follows:
\begin{eqnarray}
    Y_N(t)&:=&\frac{1}{\tau_N}X_{\theta_N(t)},\\
    Y^p_N(t)&:=&\frac{1}{\tau_N}X^p_{\theta_N(t)},\\
    &=&\frac{1}{\tau_N}\left(B_{\theta_N(t)}(X)-D_{\theta_N(t)}(X)\right)\nonumber,\\
    V^p_N(t)&:=&\frac{1}{\tau_N^2}V^p_{\theta_N(t)} (X).
\end{eqnarray}
We state below an easy proposition of a general limit theorem on rescaled processes proved in \cite{Rebolledo:1979vk}, Chapter III, Proposition 4 (see also \cite{Platen:1985vl}).
Consider $F(x,t)=\int_0^tf(x,s)ds$, and $B(x,t)=\int_0^tf^+(x,s)ds,\;D(x,t)=\int_0^tf^-(x,s)ds$, for $x\in\R{}$, $t\in [0,\infty[$. In addition, define $V(x,t)=\int_0^t v(x,s)ds$, where $v$ is a strictly positive function, for $(x,t)\in\R{}\times [0,\infty[$, $v(x,t)=\sigma^2(x,t)$.
\begin{prop}
    Assume that the sequence $X=\seque{X}{n}{\N{}}$ satisfies
    \begin{equation}
       \sup_{n\in\N{}} \mathbb{E}\left(\abs{X_n-X_{n-1}}^2\right)<\infty.
    \end{equation}
  Under the following hypotheses,
    \begin{itemize}
    \item[(H0)] $Y_N(0)$ converges in distribution to $x\in\R{}$;
\item [(H1)] $\abs{\frac{1}{\tau_N}B_{\theta_N(t)}(X)-B(Y_N(t),t)}\to 0$ in probability for all $t\geq 0$ as $N\to\infty$;
\item [(H2)] $\abs{\frac{1}{\tau_N}D_{\theta_N(t)}(X)-D(Y_N(t),t)}\to 0$ in probability for all $t\geq 0$ as $N\to\infty$;
\item [(H3)] $\abs{\frac{1}{\tau_N^2}V^p_{\theta_N(t)}(X)-V(Y_N(t),t)}\to 0$ in probability for all $t\geq 0$ as $N\to\infty$;
    \end{itemize}
    the rescaled process $Y_N$ converges in distribution to a diffusion process $X$ that satisfies the stochastic differential equation
    \begin{equation}\label{limit-diff}
        X(t)=x+\int_0^tf(X(s),s)ds+\int_0^t\sqrt{v(X(s),s)}dW_s,
    \end{equation}
    where $W$ denotes a canonical Brownian motion on the real line.
\end{prop}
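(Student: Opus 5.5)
The plan is the classical martingale-problem route for diffusion approximation: establish tightness of $(Y_N)$ in the Skorokhod space $\mathbb{D}$, then identify every limit point as a solution of the martingale problem attached to \eqref{limit-diff}. First, write the decomposition \eqref{decomposition-1} for the rescaled process:
\[
Y_N(t)=Y_N(0)+\tfrac{1}{\tau_N}\bigl(B_{\theta_N(t)}(X)-D_{\theta_N(t)}(X)\bigr)+\tfrac{1}{\tau_N}M_{\theta_N(t)} .
\]
Here $M^N(t):=\tau_N^{-1}M_{\theta_N(t)}$ is a martingale for the time-changed filtration $\mathcal{G}^N_t=\vN{F}_{\theta_N(t)}$, and its predictable bracket is exactly $V^p_N(t)$. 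The integrability hypothesis $\sup_n\mathbb{E}(\abs{\Delta X_n}^2)<\infty$ makes all these objects square integrable, so the brackets are well defined.

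For tightness I would use the criterion of \cite{Rebolledo:1979vk}, Chapter III, which is essentially Aldous' criterion. For a semimartingale sequence it suffices that (i) the initial laws are tight, which follows from (H0); (ii) the predictable finite-variation parts $Y^p_N$ are tight and C-tight; and (iii) the brackets $V^p_N$ are C-tight. By (H1)--(H3), these processes are asymptotically close to $B(Y_N(t),t)$, $D(Y_N(t),t)$ and $V(Y_N(t),t)$, which are absolutely continuous in $t$. So, provided $f$ and $v$ are locally bounded (a standing regularity assumption I would make explicit, together with continuity in $x$), the increments over small intervals are controlled. The point to check is that the jumps of $M^N$ vanish, i.e. $\tau_N^{-1}\max_{k\le\theta_N(t)}\abs{\Delta X_k}\to0$ in probability. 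I would obtain this from a Lindeberg-type condition derived from the second-moment bound when $\tau_N^2\gg\theta_N(t)$, or else argue that C-tightness of $V^p_N$ together with convergence of the brackets forces asymptotically continuous limits, as in the central limit theorems \cite{CLT-1980,CNS-CLT1980}.

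Identification proceeds as follows. Take a subsequence with $Y_N\Rightarrow Y$, and use Skorokhod representation so that the convergence is almost sure. By continuity of $f$ and $v$ in $x$ and the fact that the limit is continuous, (H1)--(H3) give $Y^p_N(t)\to\int_0^t f(Y(s),s)\,ds$ and $V^p_N(t)\to\int_0^t v(Y(s),s)\,ds$. Uniform integrability of $(M^N(t))^2$, which comes from the $L^2$ bound localized by stopping times, then lets us pass to the limit in the martingale properties. This shows that $\widetilde M(t)=Y(t)-x-\int_0^tf(Y(s),s)\,ds$ is a continuous local martingale and that $\widetilde M^2-\int_0^t v(Y(s),s)\,ds$ is a local martingale. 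By Lévy's characterization, $W_t=\int_0^t v(Y(s),s)^{-1/2}\,d\widetilde M_s$ is a Brownian motion; this uses $v>0$. Hence $Y$ solves \eqref{limit-diff} weakly.

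To conclude convergence of the full sequence, I need uniqueness in law for \eqref{limit-diff}, which I would assume via local Lipschitz conditions on $f$ and $\sqrt v$, or Stroock--Varadhan for continuous, positive $v$. The main obstacle I expect is the tightness step, specifically the vanishing of jumps and the localization needed because (H1)--(H3) give only pointwise-in-$t$ convergence in probability. I would handle it by upgrading to locally uniform convergence using monotonicity of $B$, $D$ and $V$ in $t$, in the spirit of Dini's theorem, on the event where $Y_N$ stays in a compact set.
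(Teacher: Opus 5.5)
The paper gives no argument of its own here. It quotes \cite{Rebolledo:1979vk}, and your plan (tightness through the predictable characteristics, identification through the martingale problem and L\'evy's characterization, then uniqueness in law) is the martingale method of that reference.

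The genuine gap is the step you single out yourself: the vanishing of the jumps of $M^N$. Neither of your two routes works, and in fact the stated hypotheses do not imply it. Route (a) only covers a degenerate regime. If $\tau_N^2\gg\theta_N(t)$, then $\mathbb{E}\langle M^N\rangle_t\le C\,\theta_N(t)/\tau_N^2\to0$. By Doob's inequality $\sup_{s\le t}\abs{M^N(s)}\to0$ in $L^2$, so the limit has no martingale part, which is incompatible with $v>0$. In the diffusive regime $\tau_N^2\asymp\theta_N(t)$ (e.g.\ $\tau_N=\sqrt N$, $\theta_N(t)=[Nt]$), a uniform $L^2$ bound gives no Lindeberg control. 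Route (b) is false, because the predictable bracket does not see rare large jumps. Take $X_0=0$, $\tau_N=\sqrt N$, $\theta_N(t)=[Nt]$, and independent increments $\xi_k=\Delta X_k$ with $\mathbb{P}(\xi_k=\sqrt{k})=\mathbb{P}(\xi_k=-\sqrt{k})=\frac{1}{2k}$ and $\mathbb{P}(\xi_k=0)=1-\frac{1}{k}$. Then $\mathbb{E}(\xi_k^2)=1$, $B=D=0$ and $V^p_n(X)=n$. So the moment bound and (H0)--(H3) hold with $x=0$, $f\equiv0$, $v\equiv1$, and $V^p_N(t)=[Nt]/N$ is as C-tight as one could wish. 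Yet $Y_N$ has a jump of size at least $\sqrt{\delta}$ on $[0,1]$ if and only if $\xi_k\neq0$ for some $\delta N\le k\le N$. That event has probability $1-\prod_{\delta N\le k\le N}(1-\frac{1}{k})\to1-\delta$. Hence $Y_N$ is not C-tight. Its limit is the pure-jump martingale with jumps $\pm\sqrt{s}$ at rate $\frac{1}{2s}$ each at time $s$, which has the same bracket $t$ as $W$ but is not $W$. So the proposition as printed is false, and no argument can close this step without a further hypothesis.

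What is missing is a conditional Lindeberg hypothesis on $\Delta M_k=\Delta X_k-\mathbb{E}(\Delta X_k\,|\,\mathcal{F}_{k-1})$:
\[
\frac{1}{\tau_N^2}\sum_{k=1}^{\theta_N(t)}\mathbb{E}\Bigl((\Delta M_k)^2\,\mathbf{1}_{\{\abs{\Delta M_k}>\epsilon\tau_N\}}\,\Big|\,\mathcal{F}_{k-1}\Bigr)\longrightarrow0\quad\text{in probability, for all }\epsilon>0,\ t\geq0.
\]
This is the jump condition that the martingale central limit theorems you invoke \cite{CLT-1980,CNS-CLT1980} require alongside convergence of the brackets. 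It holds, for instance, if $\{(\Delta X_n)^2\}$ is uniformly integrable and $\theta_N(t)/\tau_N^2$ stays bounded. With it, Lenglart's inequality gives $\sup_{s\le t}\abs{\Delta M^N(s)}\to0$ in probability, and the rest of your argument goes through.

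Two smaller points. First, your identification step silently reads $B(Y_N(t),t)$, $D(Y_N(t),t)$, $V(Y_N(t),t)$ as the path functionals $\int_0^t f^{\pm}(Y_N(s),s)\,ds$ and $\int_0^t v(Y_N(s),s)\,ds$. With $f^{\pm}$ literally evaluated at the current state $Y_N(t)$, you would get $\int_0^t f(Y(t),s)\,ds$, not the drift in \eqref{limit-diff}, so state that reading explicitly. Second, your localization on compacts needs non-explosion of the limit (e.g.\ linear growth of $f$ and $v$), which local Lipschitz conditions alone do not give.
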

Thus, this Proposition shows that rescaled biomasses converge in distribution to a continuous semimartingale (a diffusion process) if the pair of fitness and predictable volatility of the fitness converges to the respective fitness and volatility of the limit process. 

Now, the diffusion that represents the biomass data may provide the absolute abundance of a given species. But often, especially when there is a community with $d$ different species, genomic data, or different carrying capacities in different locations, working with proportions is more adequate; that is, the dynamics of relative abundances. We consider below the representation of open system dynamics of relative abundances.

 \section{The flow of relative abundances}\label{sec:relative-abundance}
 A proportion of biomass corresponds to an observable with values in the bounded real interval $[0,1]$. Consider $d$ biological entities with biomass proportions $x_i\geq 0$ ($i=1,\ldots,d$) normalized to have $\sum_{i=1}^dx_i\leq 1$. 

Therefore, the stochastic process $X=(X_1,\ldots,X_d)$ that represents the flow of the entire biomass proportion takes values in the simplex of dimensions $d$.

\subsection{Continuous Open System Dynamics in the simplex}\label{sec:OSD-S}

We start by introducing some  notation. Consider the $d$-dimensional vector space of real numbers, for each vector $x=(x_1\ldots,x_d)$ we write $\norm{x}=\sum_{i=1}^d\abs{x_i}$. We denote $T^d$ the $d$-dimensional closed simplex:
\[T^d=\set{x\in\R{d}:\;x_i\geq 0,\;i=1,\ldots,d,\;\norm{x}\leq 1},\]
and we reserve the notation $S^d$ for the extremal points:
\[S^d=\set{x\in T^d:\;\norm{x}=1}.\]

$X^p$ is a predictable process with trajectories in the simplex $T^d$. Thus, we now focus on the open system dynamics in the simplex to address the following problems: \begin{enumerate}
    \item How to compute fitness as a function of $X$ in continuous time;
    \item Reversibility and equilibrium;
    \item Statistical inference to estimate fitness.
\end{enumerate}

The set $T^d$ is commonly endowed with the Borel $\sigma$ field $\vN{B}(T^d)$ generated by the open sets therein. We introduce a Hilbert space $\h=L^2(T^d,\vN{B}(T^d),\mu)$ considering a probability measure $\mu$ on the measurable space $(T^d,\vN{B}(T^d))$. We are interested in probabilities that are absolutely continuous with respect to the Lebesgue measure, say such that $\mu(dx_1,\ldots,dx_d)=w(x_1,\ldots,x_d)dx_1\ldots dx_d$, that is, $w$ is the density or Radon-Nikodym derivative of $\mu$ with respect to the Lebesgue measure. In this case, the orthonormal basis of $\h$ can be obtained by orthogonal polynomials.

As in \cite{marquet2017proportional}, we will assume that the total biomass of the system is divided in individuals that are
asymptotically independent and equivalent in fitness, as envisioned in Neutral Theory \cite{hubbell2011unified}. This, in part, might be reflected in our model of relative abundances when choosing  a weight function $w$ of product type, as we do below, since it could suggest that the original species abundances share similar fitnesses and behave independently, as shown in Section \ref{sec:CIR}. 

We follow \citep{Aktas} and \citep{Dunkl-Xu} to introduce a suitable Hilbert space for our analysis. Consider a sequence $\gamma=(\gamma_1,\ldots,\gamma_{d+1})\in]-1,\infty[^{d+1}$, and define a weight function $\w{\gamma}$ as follows:
\begin{equation}\label{weight-function}
    \w{\gamma}(x)=x_1^{\gamma_1}\ldots x_d^{\gamma_d}(1-\norm{x})^{\gamma_{d+1}},
\end{equation}
for all $x=(x_1,\ldots,x_d)\in T^d$.

Let us denote $C_\gamma=1/ {\int_{T^d}\w{\gamma}(x)dx}$, so that $\mu_\gamma (dx)=C_\gamma\w{\gamma}(x)dx$ is a simplex probability measure, absolutely continuous with respect to the Lebesgue measure. We then consider the Hilbert space $\h (\gamma)=L^2(T^d,\mu_{\gamma})$ with the scalar product:
\begin{equation}
 \sca{f}{g}{\gamma}=\int_{T^d}fgd\mu_\gamma  . 
\end{equation}

Remarkable orthonormal bases for $\h(\gamma)$ are obtained using orthogonal polynomials. The properties of such polynomials have been thoroughly studied in the reference book \citep{Dunkl-Xu}. We call $\pol{n}{\gamma}$ the set of all orthogonal polynomials of degree $n$ in $\h (\gamma)$.

We will enforce the following theorem proved by Dunkl and Xu in \citep{Dunkl-Xu}. Let $\1$ denote the vector with all coordinates equal to 1.
\begin{thm}\label{Th_Dunkl-Xu}
    Any polynomial $P\in\pol{n}{\gamma}$ is an eigenvector of a second-order differential operator $L_\gamma$ of diffusion type, that is
    \begin{eqnarray}
        L_\gamma P&=&\sum_{i=1}^d x_i(1-x_i)\frac{\partial^2 P}{\partial x_i^2}\nonumber\\
        &-&2\sum_{1\leq i<j\leq d}x_ix_j\frac{\partial^2 P}{\partial x_i\partial x_j}\label{dunkl}\\
        &+&\sum_{i=1}^d \left (\gamma_i+1-(\langle\gamma,\1\rangle+d+1)x_i\right)\frac{\partial P}{\partial x_i}\nonumber\\
        &=&-n(n+\langle\gamma,\1\rangle+d)P.\nonumber
    \end{eqnarray}
\end{thm}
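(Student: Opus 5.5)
The argument rests on three facts: $L_\gamma$ is symmetric in $\h(\gamma)$, it maps polynomials of degree at most $n$ into themselves, and on top-degree terms it acts by a scalar. The space $\pol{n}{\gamma}$ is the orthogonal complement of the polynomials of degree at most $n-1$ inside the polynomials of degree at most $n$, with orthogonality taken in $\h(\gamma)$.

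\emph{Step 1 (symmetry).} For polynomials $f,g$ we show $\sca{L_\gamma f}{g}{\gamma}=\sca{f}{L_\gamma g}{\gamma}$. Write $a_{ij}(x)=x_i\delta_{ij}-x_ix_j$ for the diffusion matrix. The key identity is
\begin{equation*}
\sum_{j=1}^d\partial_j\bigl(a_{ij}\w{\gamma}\bigr)=\bigl(\gamma_i+1-(\langle\gamma,\1\rangle+d+1)x_i\bigr)\w{\gamma},
\end{equation*}
which is a direct computation using $\partial_j\w{\gamma}=\bigl(\gamma_j/x_j-\gamma_{d+1}/(1-\norm{x})\bigr)\w{\gamma}$. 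It gives the divergence form
\begin{equation*}
\w{\gamma}L_\gamma f=\sum_{i,j}\partial_i\bigl(a_{ij}\w{\gamma}\partial_j f\bigr).
\end{equation*}
Integrating by parts twice over $T^d$ then yields the symmetry, provided the boundary terms vanish.

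Controlling those boundary terms is the main obstacle. On the face $x_i=0$ the normal flux involves $a_{ij}\w{\gamma}$, and $a_{ij}$ carries a factor $x_i$, so this flux contains $x_i^{\gamma_i+1}$. On the face $\norm{x}=1$ the normal component is $\sum_j a_{ij}=x_i(1-\norm{x})$, so the flux contains $(1-\norm{x})^{\gamma_{d+1}+1}$. Because every $\gamma_k>-1$, these exponents are positive, so the fluxes vanish on the boundary and the integrals converge. To make this rigorous, I would first integrate over the shrunken simplices $\{x_i\ge\epsilon,\ \norm{x}\le 1-\epsilon\}$ and then let $\epsilon\to0$.

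\emph{Step 2 (degree preservation and triangularity).} Apply $L_\gamma$ to a monomial $x^\alpha$ with $|\alpha|=n$. The second-order terms $x_i\partial_i^2$ and the first-order terms $(\gamma_i+1)\partial_i$ lower the degree. The surviving degree-$n$ terms are
\begin{equation*}
-\sum_i x_i^2\partial_i^2-2\sum_{i<j}x_ix_j\partial_i\partial_j-(\langle\gamma,\1\rangle+d+1)\sum_i x_i\partial_i,
\end{equation*}
and acting on $x^\alpha$ this produces $-\bigl[n(n-1)+(\langle\gamma,\1\rangle+d+1)n\bigr]x^\alpha$. That coefficient equals $-n(n+\langle\gamma,\1\rangle+d)$, which we denote $\lambda_n$. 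It follows that $L_\gamma x^\alpha=\lambda_n x^\alpha+(\text{terms of degree }<n)$.

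\emph{Step 3 (conclusion).} Let $P\in\pol{n}{\gamma}$. By Step 2, the polynomial $Q=L_\gamma P-\lambda_nP$ has degree at most $n-1$. For any $R$ of degree at most $n-1$, Step 1 gives
\begin{equation*}
\sca{Q}{R}{\gamma}=\sca{P}{L_\gamma R}{\gamma}-\lambda_n\sca{P}{R}{\gamma}.
\end{equation*}
Both terms on the right vanish, because $L_\gamma R$ has degree at most $n-1$ and $P$ is orthogonal to all polynomials of degree below $n$. Hence $Q$ is orthogonal to itself, so $Q=0$, which proves $L_\gamma P=-n(n+\langle\gamma,\1\rangle+d)P$.
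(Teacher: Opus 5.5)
The paper gives no proof of this theorem. It quotes the result from Dunkl and Xu and uses it as a black box, so your argument is a genuinely different, self-contained route: symmetry of $L_\gamma$ in $\h(\gamma)$, plus the facts that $L_\gamma$ preserves degree and acts on top-degree terms by the scalar $-n(n+\langle\gamma,\1\rangle+d)$. Your proof is correct, and I checked the details.

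\begin{itemize}
\item $\sum_j\partial_j a_{ij}=1-(d+1)x_i$ and $\sum_j a_{ij}\,\partial_j\w{\gamma}/\w{\gamma}=\gamma_i-x_i\sum_{k=1}^{d+1}\gamma_k$. Together these reproduce the drift exactly, under the paper's convention that $\langle\gamma,\1\rangle$ sums all $d+1$ components.
\item On the face $x_i=\epsilon$ the flux is $\epsilon^{\gamma_i+1}$ times a Dirichlet-type integral, which stays bounded because all exponents exceed $-1$.
\item The top-degree part of $L_\gamma$ is $-E(E-1)-(\langle\gamma,\1\rangle+d+1)E$ with $E=\sum_i x_i\partial_i$.
\item $\sca{Q}{Q}{\gamma}=0$ forces $Q\equiv 0$ because $\w{\gamma}>0$ in the interior of $T^d$.
\end{itemize}

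One integration by parts already suffices: it gives $\sca{L_\gamma f}{g}{\gamma}=-\int\sum_{i,j}a_{ij}\,\partial_i f\,\partial_j g\,d\mu_\gamma$, which is visibly symmetric.

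Compared with the citation, your route does two things. First, it shows exactly where the hypothesis $\gamma_k>-1$ is needed. Second, it ties the theorem to the rest of the paper. The paper's $A(x)$ is twice your matrix $(a_{ij})$, so your identity $\sum_j\partial_j(a_{ij}\w{\gamma})=\bigl(\gamma_i+1-(\langle\gamma,\1\rangle+d+1)x_i\bigr)\w{\gamma}$ is precisely the zero-current condition \eqref{condition-current} for $\rho=C_\gamma\w{\gamma}$. One computation therefore yields:
\begin{itemize}
\item the eigenfunction property;
\item the zero-current (reversibility) part of Theorem~\ref{th:dirch-inv};
\item the symmetry on which the expansion \eqref{spectral-app} rests.
\end{itemize}

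What the citation offers in return is explicit knowledge of $\pol{n}{\gamma}$. Your argument proves that every element is an eigenfunction, but it produces no bases. In particular it does not give the normalized Jacobi polynomials \eqref{eigenfunctions} used in Section~\ref{sec:stat-jac}.
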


Let us denote $\Omega=C([0,\infty[,T^d)$ the space of continuous functions defined in the interval $[0,\infty[$ with values in $T^d$. Call $X(t,\omega)=\omega (t)$, for all $\omega\in \Omega$, $t\in [0,\infty[$ the canonical process, and consider the family of $\sigma$ fields $\mathbb{F}=(\vN{F})_{t\geq 0}$, where $\vN{F}_t=\sigma (X(s):\;s\leq t)$ for all $t\geq 0$ and $\vN{F}=\sigma (X(t):\;t \geq 0)$. We recall that the foundational work of Stroock and Varadhan characterizes diffusions as follows\cite{w.stroock1969diffusion, w.stroock1969diffusion, chetrite2021e}. The continuous process $X$ is a diffusion associated with the second-order differential operator $L_\gamma$ if and only if there exists a unique probability $\mathbb{P}_\gamma$ such that for all functions of two-valued real $g$ defined on the simplex, we have the process
\begin{equation}\label{mart-problem}
    M^g(t):=g(X(t))-g(X(0))-\int_0^tL_\gamma g(X(s))ds, \;(t\geq 0),
\end{equation}
is a $(\mathbb{F},\mathbb{P}_\gamma)$-martingale. One says that $\mathbb{P}_\gamma$ is a (unique) solution to the \textit{Martingale Problem} $(\nu,L_\gamma)$, where $\nu$ is the initial probability law of the random variable $X(0)$. The martingale $M^g$ has the associated increasing process
\begin{equation}\label{carre1}
    \langle M^g\rangle=\int_0^t\Gamma  g(X(s))ds,
\end{equation}
where $\Gamma$ is the carré du champ operator defined as
\begin{equation}\label{carre}
    \Gamma g=L_\gamma g^2-2fL_\gamma g,
\end{equation}
for any twice differentiable function $g$.

We write the generator \eqref{dunkl} in condensed form as follows:
\begin{equation}\label{generator-resum}
    L_\gamma g(x)= f(x)^T\nabla g(x)+ \frac{1}{2}(A(x)\nabla)^T\nabla f(x),        
\end{equation}
where $f(x)$ is the column vector with components $f_i(x)=\gamma_i+1-(\langle \gamma+\1,\1\rangle)x_i$, and $A(x)$ is the positive definite matrix with components
\[a_{i,j}(x)=-2x_ix_j,\; i\not=j;\; a_{i,i}=2x_i(1-x_i).\]
Call $\sigma(x)$ the square root of $A(x)$, $\sigma(x)\sigma(x)^T=A(x)$.
These notations allow us to write the stochastic differential equation associated with the generator $L_\gamma$ according to the Stroock-Varadhan theory.

\begin{cor}
Under the probability $\mathbb{P}_\gamma$, the canonical process $X$ introduced here before satisfies a stochastic differential equation  of the form 
\begin{equation}\label{diffusion}
X(t)=X(0)+\int_0^tf(X(s))ds+\int_0^t \sigma (X(s))dW(s),
\end{equation}
where $W$ is a $d$-dimensional Brownian motion. Call $M(t)=\int_0^t\sigma (X(s))dW(s)$, $t$ running on $[0,\infty[$.

As a result, the fitness associated with the biomass flow represented by $X$ is given by the process $X^p(t)=\int_0^tf(X(s))ds$ and the fitness volatility is $\langle M\rangle (t)=\int_0^t\tr{A(X(s))}ds$, for all $t\geq 0$.

\end{cor}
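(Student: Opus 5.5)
The plan is to derive the stochastic differential equation from the martingale problem \eqref{mart-problem} by the standard Stroock--Varadhan route: identify the drift and the quadratic covariation of the coordinate processes, then represent the resulting continuous local martingale as a stochastic integral against a Brownian motion. The fitness statements will then follow from the Definition together with the uniqueness of the canonical decomposition of special semimartingales.

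\emph{Step one: coordinate martingales.} Apply \eqref{mart-problem} to the coordinate functions $g(x)=x_i$. These are smooth and bounded on $T^d$, so $M^{x_i}$ is a $(\mathbb{F},\mathbb{P}_\gamma)$-martingale. Since the second derivatives of $x_i$ vanish, \eqref{dunkl} gives $L_\gamma x_i=\gamma_i+1-(\langle\gamma,\1\rangle+d+1)x_i=f_i(x)$. Hence
\[
X_i(t)=X_i(0)+\int_0^t f_i(X(s))\,ds+M_i(t),
\]
with $M_i:=M^{x_i}$ a continuous martingale.

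\emph{Step two: covariations.} Apply the martingale problem to $g(x)=x_ix_j$ and use It\^o's product rule. Matching finite-variation parts, and using that a continuous finite-variation martingale is constant, yields $d\langle M_i,M_j\rangle_t=\Gamma(x_i,x_j)(X(t))\,dt$, where
\[
\Gamma(x_i,x_j)=L_\gamma(x_ix_j)-x_iL_\gamma x_j-x_jL_\gamma x_i .
\]
From \eqref{dunkl}, this equals $2x_i(1-x_i)$ when $i=j$ and $-2x_ix_j$ when $i\neq j$. So $\langle M_i,M_j\rangle_t=\int_0^t a_{ij}(X(s))\,ds$ with $A$ exactly as in \eqref{generator-resum}; the diagonal case is \eqref{carre1}.

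\emph{Step three: Brownian representation.} This is the main obstacle. $A(x)$ is only positive semidefinite on $T^d$: it degenerates on the boundary faces, and on $S^d$ one has $A\1=0$. Consequently $\sigma(x)=A(x)^{1/2}$ need not be invertible, and we cannot simply define $W=\int\sigma^{-1}\,dM$. I would use the standard representation theorem for continuous local martingales with absolutely continuous covariation (Ikeda--Watanabe, or Stroock--Varadhan, Thm.~4.5.2). Enlarge the probability space by an independent $d$-dimensional Brownian motion $B$, let $\pi(x)$ denote the orthogonal projection onto $\ker\sigma(x)$, and set
\[
W(t)=\int_0^t \sigma^{+}(X(s))\,dM(s)+\int_0^t \pi(X(s))\,dB(s),
\]
where $\sigma^+$ is the Moore--Penrose pseudoinverse. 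Both integrands are measurable functions of $X$ because the symmetric square root is continuous in $A$. L\'evy's characterization then shows that $W$ is a Brownian motion, since $\langle W\rangle_t=tI$, and that $M=\int\sigma(X)\,dW$, which establishes \eqref{diffusion}. This realizes the law $\mathbb{P}_\gamma$ as a weak solution, which is the sense in which the Corollary should be read.

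\emph{Step four: fitness.} $X$ is continuous, so it is a special semimartingale. $X^p(t)=\int_0^tf(X(s))\,ds$ is continuous, adapted, hence predictable, and of finite variation, being bounded on the compact set $T^d$. The process $M=X-X(0)-X^p$ is a martingale. By uniqueness of the predictable decomposition, the Definition identifies $X^p$ as the predictable fitness. For the volatility of the vector martingale $M$, the natural scalar is $\sum_i\langle M_i\rangle$, which by Step two equals $\int_0^t\tr{A(X(s))}\,ds$; the full matrix-valued bracket is $\int_0^tA(X(s))\,ds$.
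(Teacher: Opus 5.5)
Your argument is correct and is the Stroock--Varadhan route that the paper only invokes ("according to the Stroock--Varadhan theory") without writing out: you read off the drift from $L_\gamma x_i=f_i$, get $\langle M_i,M_j\rangle=\int_0^t a_{ij}(X(s))\,ds$ from the carr\'e du champ, represent $M=\int\sigma(X)\,dW$ on an enlarged space, and interpret the volatility as $\sum_i\langle M_i\rangle$. Your version is more careful than the paper's, which calls $A$ positive definite even though it degenerates on $\partial T^d$; the one small correction is that $\sigma^+$ and $\pi$ are Borel rather than continuous (for instance $\sigma^+=\lim_{\epsilon\downarrow 0}(\sigma^2+\epsilon I)^{-1}\sigma$), and Borel measurability is all the stochastic integrals require.
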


We explore some useful relations between orthogonal polynomials and the biomass diffusion process $X$ defined by $L_\gamma$.

\begin{cor}
    Given any polynomial $P\in\pol{n}{\gamma}$, the process \begin{equation}
     M^{P}(t)=P(X(t))-P(X(0))+n(n+\langle\gamma,\1\rangle+d)\int_0^t  P(X(s))ds,
    \end{equation}
    is a continuous martingale that satisfies
    \begin{equation}
        \langle M^P\rangle (t)=\int_0^t\tr{A(X(s))\nabla\nabla^TP(X(s)}ds.    \end{equation}
\end{cor}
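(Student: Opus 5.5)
The plan is to read the statement off the martingale problem \eqref{mart-problem}, taking $g=P$, and then to identify the bracket through the carré du champ \eqref{carre}.

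\emph{Step 1: the martingale.} Since $P$ is a polynomial, it is smooth on a neighbourhood of the compact simplex $T^d$. It is therefore an admissible test function for the martingale problem solved by $\mathbb{P}_\gamma$, and
\[
M^P(t)=P(X(t))-P(X(0))-\int_0^t L_\gamma P(X(s))\,ds
\]
is an $(\mathbb{F},\mathbb{P}_\gamma)$-martingale. By Theorem~\ref{Th_Dunkl-Xu}, $L_\gamma P=-n(n+\langle\gamma,\1\rangle+d)P$. Substituting this gives exactly the displayed form of $M^P$. Continuity of $M^P$ follows because $X$ has continuous paths and $P$ is continuous. The martingale is a true martingale, not only a local one, because $P$ and $L_\gamma P$ are bounded on $T^d$, so $M^P$ is bounded on every bounded time interval.

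\emph{Step 2: the bracket.} By \eqref{carre1}, $\langle M^P\rangle(t)=\int_0^t \Gamma P(X(s))\,ds$ with $\Gamma P=L_\gamma(P^2)-2PL_\gamma P$. I will compute this using the condensed form \eqref{generator-resum}. The first-order part is a derivation, so it cancels in $\Gamma$. For the second-order part, the identity
\[
\partial_{ij}(P^2)=2P\,\partial_{ij}P+2\,\partial_iP\,\partial_jP
\]
leaves only the term
\[
\tfrac12\sum_{i,j}a_{ij}\cdot 2\,\partial_iP\,\partial_jP=\nabla P^T A\,\nabla P .
\]
This can be written as $\tr{A\,\nabla P\,\nabla P^T}$, which is how I read the expression $\tr{A(X(s))\nabla\nabla^TP(X(s))}$ in the statement, namely as the trace of $A$ against the outer product of the gradient.

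\emph{Main obstacle: normalisation.} The one step needing care is checking that the second-order part of \eqref{dunkl} equals $\tfrac12\sum_{i,j}a_{ij}\partial_{ij}$ with the stated $a_{ij}$. The diagonal terms give $\tfrac12\cdot 2x_i(1-x_i)=x_i(1-x_i)$. The off-diagonal pairs $(i,j)$ and $(j,i)$ together give $\tfrac12\cdot 2\cdot(-2x_ix_j)=-2x_ix_j$ for $i<j$. Both agree with \eqref{dunkl}. Once this is confirmed, the formula for $\langle M^P\rangle$ follows. An alternative route is Itô's formula applied to $P(X)$ using the SDE \eqref{diffusion}: the martingale part is $\int\nabla P^T\sigma\,dW$, whose quadratic variation is again $\int \nabla P^T\sigma\sigma^T\nabla P\,ds$, giving the same result.
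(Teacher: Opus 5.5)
Your proposal is correct and follows the paper's proof essentially step for step: apply the martingale problem \eqref{mart-problem} with $g=P$, substitute the eigenvalue from Theorem~\ref{Th_Dunkl-Xu}, and then compute $\Gamma P=L_\gamma P^2-2PL_\gamma P=2\bigl[\sum_i x_i(1-x_i)(\partial_iP)^2-2\sum_{i<j}x_ix_j\,\partial_iP\,\partial_jP\bigr]=\nabla P^T A\,\nabla P$. Your reading of the bracket as the trace of $A$ against the outer product $\nabla P\,\nabla P^T$, rather than against the Hessian, is exactly the expression the paper's proof arrives at, and your normalisation check and boundedness remark are sound additions.
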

\begin{pf}
    By \eqref{mart-problem}, $M^P(t)=P(X(t))-P(X(0))-\int_0^tL_\gamma P(X(s))ds$, ($t\geq 0$) is a martingale. Therefore, it suffices to apply that $P$ is an eigenfunction of $L_\gamma$ and the stated result follows. Moreover, a straightforward computation of $\Gamma P$ gives:
    \begin{eqnarray}
        \Gamma P&=&L_\gamma P^2-2P L_\gamma P\nonumber\\
        &=&2\left[\sum_{i=1}^dx_i(1-x_i)\left(\frac{\partial P}{\partial x_i}\right)^2-2\sum_{1\leq i<j\leq d}x_ix_j\frac{\partial P}{\partial x_i}\frac{\partial P}{\partial x_j}\right]\nonumber\\
        &=&\tr{A\nabla P\nabla^TP},
    \end{eqnarray}
    which yields the expression for $\langle M^P\rangle$.
\end{pf}

Notice that for a polynomial $P_1$ of degree 1 and if $X(0)=0$, then
\[P_1(X(t))-P_1(0)=\sum_{i=1}^d\frac{\partial P_1}{\partial x_i}(0)X_i(t)=\langle \nabla P_1(0),X(t)\rangle,\]
and 
\[\langle \nabla P_1(0),X(t)\rangle-(1+\langle \gamma,\1\rangle+d)\int_0^t  P_1(X(s))ds,\]
is a continuous martingale.
\begin{cor}
 For any $d$, the fitness of a biomass flow moving on $T^d$ endowed with the measure $\mu_\gamma$ and starting from 0, is given by the couple $(X^p,\langle M^{P_1}\rangle)$ that satisfies the following equations:
 \begin{eqnarray}
    \langle \nabla P_1(0),X^p(t)\rangle&=&-(1+\langle \gamma,\1\rangle+d)\int_0^tP_1(X(s))ds\\
    &=&\int_0^t\left(\sum_{i=1}^d
(\gamma_i+1-(\langle \gamma,\1\rangle+d+1)X_i(s)\right)\frac{\partial}{\partial x_i}P_1(0)ds,\nonumber\end{eqnarray}
\begin{equation}
    \langle M^{P_1}(t)\rangle=\int_0^t\sigma^2(X(s))ds,
    \end{equation}
where $\sigma (X(s))$ is the diagonal matrix with components $c_P\sqrt{X_i(s)(1-X_i(s))}$, $i=1,\ldots,d$, and $P_1$ is an orthogonal polynomial of degree 1 in $d$ variables, $P_1(x)=\sum_{i=1}^d(r_i+s_ix_i)$, $x\in T^d$.
\end{cor}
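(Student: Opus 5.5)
The plan is to specialize the preceding Corollary (the martingale $M^P$ attached to an eigenpolynomial) to a degree-one orthogonal polynomial $P_1$. Then I will identify the drift and the bracket of $P_1(X)$ with the fitness pair of the earlier Corollary on the SDE \eqref{diffusion}.

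\textbf{Step 1 (drift).} Since $P_1$ is affine, $P_1(X(t))-P_1(X(0))=\langle \nabla P_1(0),X(t)-X(0)\rangle$. With $X(0)=0$ this reduces to $\langle\nabla P_1(0),X(t)\rangle$, as in the remark just before the statement. By the SDE \eqref{diffusion}, $X=X^p+M$ with $X^p(t)=\int_0^t f(X(s))ds$. Pairing with the constant vector $\nabla P_1(0)$ gives the semimartingale $\langle\nabla P_1(0),X\rangle$. Its unique predictable finite-variation part is $\langle \nabla P_1(0),X^p(t)\rangle=\int_0^t \langle f(X(s)),\nabla P_1(0)\rangle ds$, which is the second expression in the first display.

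On the other hand, Theorem \ref{Th_Dunkl-Xu} with $n=1$ gives $L_\gamma P_1=-(1+\langle\gamma,\1\rangle+d)P_1$. Since the second derivatives of $P_1$ vanish, $L_\gamma P_1(x)=\langle f(x),\nabla P_1\rangle$ pointwise. Comparing the two expressions yields the first equality.

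Uniqueness of the decomposition of the special (continuous) semimartingale $\langle \nabla P_1(0),X\rangle$ guarantees that the two representations of the predictable part coincide, and not merely up to a martingale. I will invoke this uniqueness explicitly.

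\textbf{Step 2 (bracket).} The preceding Corollary gives $\langle M^{P_1}\rangle(t)=\int_0^t \Gamma P_1(X(s))ds$, where
\[
\Gamma P_1=2\Big[\sum_i x_i(1-x_i)s_i^2-2\sum_{i<j}x_ix_js_is_j\Big],
\]
using the constant gradient with components $s_i$. I will read this as $\int_0^t \sigma^2(X(s))\,ds$, with $\sigma^2$ understood as the contraction $\nabla P_1^T A\,\nabla P_1$. Its diagonal part is the diagonal matrix $c_P^2\,\mathrm{diag}(X_i(1-X_i))$, with $c_P$ absorbing $\sqrt2\,s_i$.

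\textbf{Main obstacle.} The difficulty is this last identification. $A(x)$ has nonzero off-diagonal entries $-2x_ix_j$, so a genuinely diagonal $\sigma$ is only exact if the cross terms are accounted for. I would first try to show that for the orthogonal degree-one $P_1$ the cross contribution can be absorbed into the normalizing constant $c_P$. If that fails, I would state the result with $\sigma^2$ interpreted as $\nabla P_1^TA\nabla P_1$ and note that the diagonal form holds up to the off-diagonal correction.
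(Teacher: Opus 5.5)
Your drift step is correct and follows the paper's proof. The paper likewise computes $L_\gamma P_1(x)=\langle f(x),\nabla P_1(0)\rangle=-(1+\langle\gamma,\1\rangle+d)P_1(x)$. It then applies the martingale problem and It\^o's formula to $g(x)=\langle\nabla P_1(0),x\rangle$, identifies $M^{P_1}(t)=\int_0^t\langle\nabla P_1(0),\sigma(X(s))dW(s)\rangle$, and reads off the predictable part. The first equality in the display is just this pointwise identity integrated along the path, so you do not need the uniqueness of the special-semimartingale decomposition there.

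The gap is your first plan for the bracket, which cannot succeed. $M^{P_1}$ is determined by $X$ through the martingale problem, so its bracket is $\int_0^t\Gamma P_1(X(s))\,ds$ with
\[
\Gamma P_1(x)=\nabla P_1(0)^TA(x)\nabla P_1(0)=2\sum_{i=1}^d s_i^2x_i(1-x_i)-4\sum_{1\leq i<j\leq d}s_is_jx_ix_j .
\]
This holds whichever square root $\sigma$ of $A$ you put in the SDE. Orthogonality of $P_1$ gives no leverage on the cross terms. It only says $\int P_1\,d\mu_\gamma=0$, i.e.\ it fixes $\sum_i r_i=-\sum_i s_i(\gamma_i+1)/(\langle\gamma,\1\rangle+d+1)$ and leaves $(s_1,\ldots,s_d)$ free. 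For $d=2$ and $s=(1,1)$ you get $\Gamma P_1(x)=2(x_1+x_2)(1-x_1-x_2)$, which contains the monomial $-4x_1x_2$. Any expression $\sum_i c_i^2x_i(1-x_i)$ coming from a diagonal $\sigma$ has no $x_1x_2$ term, so no choice of $c_P$ can absorb it.

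The diagonal form in the statement can therefore hold only when all cross products $s_is_j$ ($i\neq j$) vanish. This includes $d=1$, where $\Gamma P_1(x)=2s_1^2x(1-x)$ is the Jacobi volatility. Go straight to your fallback: $\langle M^{P_1}\rangle_t=\int_0^t\nabla P_1(0)^TA(X(s))\nabla P_1(0)\,ds$ is the correct conclusion. It is also all that the paper's argument supports. The paper's proof stops at identifying $M^{P_1}$ as the stochastic integral above, whose bracket is exactly this contraction, and never verifies the diagonal form.
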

\begin{pf}
Let an orthogonal polynomial $P_1\in\vN{P}^1$ be given. So, $L_\gamma P_1$ reduces to
\[L_\gamma P_1(x)=\sum_{i=1}^d
(\gamma_i+1-(\langle \gamma,\1\rangle+d+1)s_ix_i=-(1+\langle \gamma,\1\rangle+d)P_1(x),\]
for all $x\in T^d$. Therefore, an application of \eqref{mart-problem}, noticing that $\nabla P_1(x)=\nabla P_1(0)$, shows that
\begin{equation}\label{mart1}
M^{P_1}(t)=P_1(X(t))-P_1(0)-\int_0^t\langle b(X(s)),\nabla P_1(0)\rangle ds,\end{equation}
is a continuous martingale, which can be written in the form
\begin{equation}\label{mart2}
M^{P_1}(t)=P_1(X(t))-P_1(0)+(1+\langle \gamma,\1\rangle+d)\int_0^tP_1(X(s))ds\end{equation}
\[=\langle \nabla P_1(0),X(t)\rangle+(1+\langle \gamma,\1\rangle+d)\int_0^tP_1(X(s))ds.\]

Consider the stochastic differential equation associated to $L_\gamma$,
\begin{equation}\label{SDE}
    dX(t)=f(X(t))dt+\sigma(X(t))dW(t),
\end{equation}
where $\sigma (x)$ is a $d\times d$-matrix such that $\sigma(x)\sigma(x)^T=A(x)$ and $f$ are defined in \eqref{generator-resum}.

Consider the function $g(x)=\langle \nabla P_1(0),x\rangle$, $x\in T^d$, this function satisfies: $\nabla g(x)=\nabla P_1(0)$, $\nabla\nabla^T g(x)=0$. So, a straightforward application of Ito's formula gives:

\begin{equation}\label{mart3}
    \langle \nabla P_1(0), X(t)\rangle=\int_0^t\langle f(X(s)),\nabla P_1(0)\rangle ds+\int_0^t\langle \nabla P_1(0),\sigma (X(s))dW(s)\rangle.\end{equation}

Notice that $\langle \nabla P_1(0),X(t)\rangle=P_1(X(t))-P_1(0)$, so that \eqref{mart3} combined with \eqref{mart1} allows us to identify $M^{P_1}$ as
\begin{equation}
    M^{P_1}(t)=\int_0^t\langle \nabla P_1(0),\sigma(X(s))dW(s)\rangle.
\end{equation}
Moreover, by \eqref{mart2}, we get
\begin{equation}
\langle \nabla P_1(0),X(t)\rangle=\int_0^t\langle \nabla P_1(0),\sigma (X(s))dW(s)\rangle-(1+\langle \gamma,\1\rangle+d)\int_0^tP_1(X(s))ds.
\end{equation}

Therefore,
\begin{equation}
\langle \nabla P_1(0),X^p(t)\rangle=-(1+\langle \gamma,\1\rangle+d)\int_0^tP_1(X(s))ds.   
\end{equation}
\end{pf}

\begin{rem}
    Let us rephrase the previous corollary. If one assumes that neutrality suitably describes the large-time behavior of biomass flows, then fitness is an affine function of the biomass, while fitness volatility  is quadratic.
\end{rem}
\subsection{Invariant measures, entropy production and equilibrium}
A natural question that arises is the existence of invariant measures for the dynamics defined by the Markov semigroup with generator $L_\gamma$. Under the neutrality hypothesis represented at the level of the measure-theoretic structure imposed on the simplex by the product measure $\mu_\gamma$, one looks for an invariant probability measure $\nu (dx)=\rho (x)dx$ defined on the simplex. That is, $\rho$ is a probability density with respect to the Lebesgue measure that satisfies $\int_{T^d} \rho(x)dx=1$ and should satisfy the equation
\begin{equation}
    L_{\gamma *}\rho=0,
\end{equation}
where $L_{\gamma *}$ is the predual generator of the semigroup. This predual generator is obtained by duality as follows
\begin{equation}
    \langle L_{\gamma *}\rho,g\rangle_\gamma=\langle \rho,L_\gamma g\rangle_\gamma,
\end{equation}
for all twice differentiable functions $g$ defined in $T^d$. The above definition requires that $\rho$ be equally twice differentiable, and the explicit expression of $L_{\gamma *}\rho$ is

\begin{eqnarray}
       ( L_{\gamma *}\rho)(x)&=&\frac{1}{2}\nabla^T\left(\nabla (A(x)\rho(x))-\nabla^T(f(x)\rho(x))\right)\nonumber\\
        &=&\nabla^T\left(\frac{1}{2}\nabla (A(x)\rho(x))-f(x)\rho(x)\right)\\
        &=&\frac{1}{2}\sum_{i,j=1}^{d}\frac{\partial^2(a_{i,j}(x)\rho(x))}{\partial x_i\partial x_j}-\sum_{i=1}^{d}\frac{\partial(f_i(x)\rho(x))}{\partial x_i}.\nonumber
    \end{eqnarray}

Going further, we follow \cite{Jiang.2004} to include in our analysis the expressions of \textit{entropy production} and \textit{probability current} that allow us to characterize equilibrium. The probability current, also called flux, is defined as a functional $J(\rho,x)$ as follows:
\begin{equation}\label{current}
J(\rho,x)= f(x)\rho(x)-\frac{1}{2}\nabla (A(x)\rho(x)),   
\end{equation}
for a probability density $\rho\in L^1(T^d,\mu_\gamma)$ and $x\in T^d$. 

Consider the diffusion process $X$ and call $\rho_t(x)$ the probability density of the random variable $X_t$, ($t\geq 0$). The Fokker-Planck equation satisfied by $\rho_t$ can be written as:
\begin{equation}
    \frac{\partial \rho_t(x)}{\partial t}=-\nabla^TJ(\rho_t,x).
\end{equation}

The probability current (or flux) $J$ connects three concepts for a stationary measure: equilibrium, reversibility, and entropy production. Under a given stationary measure, reversibility concerns the possibility that the biomass process could be reversed in time; the entropy production controls reversibility by computing the derivative of the relative entropy between the laws of the forward and reverse processes, given a stationary measure (see \cite{Jiang.2004}). If this derivative is null, then the relative entropy remains constant. An equilibrium stationary measure is characterized by zero entropy production. Following \cite{Jiang.2004} Theorem 4.1.7, the expression for entropy production in our notation is given by:
\begin{equation}\label{entprod}
    e_p=\int_{\R{d}}\frac{\left(J(\rho,x)\right)^TA^{-1}(x)J(\rho,x)}{\rho(x)}dx.
\end{equation}

Since our process takes values in the simplex $T^d$, the probability current is null outside this domain. Therefore, a stationary density $\rho$ is an equilibrium density if and only if $J(\rho,x)=0$ for all $x\in T^d$; that is, if and only if $J$ satisfies the \textit{detailed balance condition}, and so the process is reversible. 

Invariant measures that arise in these open abundance dynamics naturally assume a Gibbs-type structure due to the ordinary differential equation $J(\rho,x)=0$ in the simplex. The associated generators are symmetric with respect to these invariant measures, yielding spectral decompositions in terms of orthogonal polynomial systems, particularly Jacobi polynomials in the reversible simplex case.

Let us define the class of Gibbs stationary densities on $\R{d}$ endowed with the $d$-Lebesgue measure as 
\begin{equation}
\vN{G}(\R{d})=\set{\rho\in L^1(\R{d},\lambda_d): \rho(x)=\frac{1}{Z}\exp (U(x)), \;(x\in\R{d}), Z=\int_{\R{}}\exp (U(x))dx<\infty} ,   
\end{equation}
The function $U$ will be referred to as the \textit{fitness potential} of the biomass dynamics if $\rho(x)=\frac{1}{Z}\exp (U(x))$ is an equilibrium density for its Markov generator. 

Here below, we consider the family $\vN{G}(T^d)$ of Gibbsian densities defined on the $d$-dimensional simplex endowed with the Lebesgue measure.
\begin{thm}\label{th:dirch-inv}
The density $\rho(x)=C_\gamma\mathbb{W}_\gamma(x)$, of the measure $\mu_\gamma$ is the unique equilibrium density of the biomass dynamics. It is of Gibbsian type with fitness potential $U$ whose gradient is
\begin{equation}\label{fitness-pot}
\nabla U(x)=A(x)^{-1}(2f(x)-\nabla A(x))=\sum_{i=1}^d\gamma_i\log x_i+\gamma_{d+1}\log(1-\norm{x}),
\end{equation}
for all $x\in T^d$.

Due to the form of the potential $U$, the equilibrium measure $\mu_\gamma$ has the structure of a Dirichlet measure. 

Notice that the equilibrium measure $\mu_\gamma$ is Dirichlet if and only if any of the equivalent conditions are satisfied 
\begin{equation}\label{condition-U}
    \log \mathbb{W}_\gamma(x)+\log C_\gamma=A(x)^{-1}(2f(x)-\nabla A(x)) \;(x\in T^d)
\end{equation}
\begin{equation}\label{condition-current}
 J(\rho,x)=0 \;(x\in T^d),   
\end{equation}
where $\rho(x)=C_\gamma \mathbb{W}_\gamma(x)$
This means that the Dirichlet measure $\mu_\gamma$ is the equilibrium measure if and only if fitness is an affine function and fitness volatility is quadratic.
\end{thm}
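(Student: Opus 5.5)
The plan is to verify directly that $\rho=C_\gamma\mathbb{W}_\gamma$ makes the probability current \eqref{current} vanish identically on $T^d$. Then I will read off the potential from the resulting first-order equation, and obtain uniqueness from the non-degeneracy of $A$ in the interior of the simplex together with the integrability constraint.

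\textbf{Step 1: the flux equation.} Write $J(\rho,x)=0$ componentwise as
\[
\tfrac12\sum_{j}\partial_j\big(a_{ij}\rho\big)=f_i\rho ,\qquad i=1,\ldots,d .
\]
Expanding the left side gives $\tfrac12(\nabla\cdot A)_i\rho+\tfrac12(A\nabla\rho)_i$. So on the interior of $T^d$, where $\rho>0$ and $A$ is invertible, the condition is equivalent to
\[
A(x)\nabla\log\rho(x)=2f(x)-\nabla\cdot A(x).
\]
This is the identity behind \eqref{fitness-pot} and \eqref{condition-U}, where $\nabla A$ is read as the row divergence of $A$ and the relation is read at the level of gradients.

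\textbf{Step 2: compute with $A$ and $f$.} From $a_{ii}=2x_i(1-x_i)$ and $a_{ij}=-2x_ix_j$ one gets
\[
(\nabla\cdot A)_i=2(1-2x_i)-2\sum_{j\ne i}x_j=2-2(d+1)x_i .
\]
Hence $2f_i-(\nabla\cdot A)_i=2\gamma_i-2\langle\gamma,\1\rangle x_i$.

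For $\rho=C_\gamma\mathbb{W}_\gamma$ we have
\[
\partial_i\log\rho=\frac{\gamma_i}{x_i}-\frac{\gamma_{d+1}}{1-\norm{x}} .
\]
Applying $A$, the $i$-th component is $2x_i\big(\partial_i\log\rho-\sum_j x_j\partial_j\log\rho\big)$. Substituting, we need to evaluate $\sum_jx_j\partial_j\log\rho$. It equals
\[
\sum_{j\le d}\gamma_j-\frac{\gamma_{d+1}\norm{x}}{1-\norm{x}} .
\]
Therefore the $i$-th component is
\[
2\gamma_i-2x_i\frac{\gamma_{d+1}}{1-\norm{x}}-2x_i\sum_{j\le d}\gamma_j+2x_i\frac{\gamma_{d+1}\norm{x}}{1-\norm{x}}=2\gamma_i-2x_i\langle\gamma,\1\rangle .
\]
This matches Step 1, so $J(\rho,\cdot)=0$, and consequently $L_{\gamma*}\rho=-\nabla^TJ=0$. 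In particular $\rho$ is stationary. By the characterization via \eqref{entprod}, zero current means zero entropy production, so $\rho$ is an equilibrium and the process is reversible. Integrating $\nabla\log\rho$ gives $U=\sum_i\gamma_i\log x_i+\gamma_{d+1}\log(1-\norm{x})$ up to the constant $\log C_\gamma$, so $\rho=e^{U}/Z$ is Gibbsian with the Dirichlet structure claimed.

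\textbf{Step 3: uniqueness and the equivalences.} If $\tilde\rho$ is any equilibrium density, Step 1 forces $\nabla\log\tilde\rho=A^{-1}(2f-\nabla\cdot A)=\nabla\log\rho$ on the open, connected interior of $T^d$. Hence $\tilde\rho=c\rho$, and normalization gives $c=1$. This same computation shows that \eqref{condition-U} (interpreted as an identity of gradients) and \eqref{condition-current} are equivalent characterizations of $\mu_\gamma$ being the equilibrium measure.

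For the final sentence, I reverse the argument. If $\rho$ is the Dirichlet density, then $2f=A\nabla\log\rho+\nabla\cdot A$ is forced to be affine, since $A\nabla\log\rho$ and $\nabla\cdot A$ are both affine by the computation above, while $A$ is quadratic. Conversely, affine drift and quadratic diffusion of the Wright--Fisher form give a $\log\rho$ of the Dirichlet type. This ties the result to the previous corollary and remark.

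The main obstacle is the degeneracy of $A$ on $\partial T^d$. There $A^{-1}$ fails to exist and $\rho$ may vanish or blow up when $\gamma_i<0$. I will handle this by working on the interior and checking that $J$ extends continuously by zero at the boundary, i.e.\ that the boundary terms in the duality $\langle L_{\gamma*}\rho,g\rangle=\langle\rho,L_\gamma g\rangle$ vanish. They vanish because $a_{ij}\rho$ carries an extra factor $x_i$ (respectively $1-\norm{x}$) making it integrable and zero on the faces when $\gamma_i>-1$. I would also make the uniqueness statement precise within the class of densities that are smooth in the interior.
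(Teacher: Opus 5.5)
The paper states this theorem without a written proof. The argument it implicitly relies on is the one you give: on the interior, $J(\rho,\cdot)=0$ is equivalent to $A\nabla\log\rho=2f-\nabla\cdot A$ (the $d$-dimensional analogue of \eqref{eq:current-eq}), followed by a direct check for the Dirichlet density. You are right to read \eqref{fitness-pot} and \eqref{condition-U} as identities between $\nabla U$ (resp.\ $\nabla\log\rho$) and $A^{-1}(2f-\nabla\cdot A)$, since as printed they equate a vector field with a scalar.

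Your verification in Step 2 is correct except for one slip. For $j\neq i$, $\partial_j a_{ij}=\partial_j(-2x_ix_j)=-2x_i$, so the off-diagonal contribution to $(\nabla\cdot A)_i$ is $-2(d-1)x_i$, not $-2\sum_{j\neq i}x_j$. With this correction you do get the value $2-2(d+1)x_i$ that you use afterwards.

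Two smaller points. In Step 3 you need not assume $\tilde\rho>0$: the linear equation $A\nabla\tilde\rho=(2f-\nabla\cdot A)\tilde\rho$ gives $\nabla(\tilde\rho/\rho)=0$ directly. In your boundary discussion, what vanishes on the face $\norm{x}=1$ is the normal component $A\1=2(1-\norm{x})x$, not the entries $a_{ij}$ themselves. For $d\geq 2$ and $\gamma_{d+1}<0$, $a_{ij}\rho$ is unbounded near that face, so the argument must be phrased through $\rho\,An$.

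The genuine gap is the converse you assert for the last sentence of the theorem. For $d\geq 2$ it is false that an arbitrary affine drift together with the Wright--Fisher matrix $A$ yields a Dirichlet equilibrium. A zero-current density exists only if $A^{-1}(2f-\nabla\cdot A)$ is a gradient. Since $A=2(D-xx^T)$ with $D=\mathrm{diag}(x)$, Sherman--Morrison gives $(A^{-1}v)_i=\tfrac12\bigl(v_i/x_i+\sum_jv_j/(1-\norm{x})\bigr)$. Take $f(x)=c-Mx$, $v=2f-\nabla\cdot A$ and $N=M-(d+1)I$. Then for $i\neq k$
\[
\partial_k(A^{-1}v)_i-\partial_i(A^{-1}v)_k=-\frac{N_{ik}}{x_i}+\frac{N_{ki}}{x_k}+\frac{\sum_lN_{li}-\sum_lN_{lk}}{1-\norm{x}},
\]
which vanishes identically on the interior only if $N$, hence $M$, is a multiple of the identity. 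So for a general affine drift there is no equilibrium density at all, Dirichlet or otherwise.

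What is true, and what your forward computation actually proves, is the following. With Wright--Fisher volatility, $\mu_\gamma$ is the equilibrium measure if and only if $f_i(x)=\alpha_i-s x_i$ with a common rate $s$, namely $\alpha_i=\gamma_i+1$ and $s=\langle\gamma+\1,\1\rangle$. Only for $d=1$ does every affine drift $c-mx$ with $0<c<m$ give a Beta equilibrium. Since the theorem's final sentence is phrased loosely, you should state and prove this sharpened equivalence rather than the literal one.
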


\begin{rem}
    The previous theorem has an extension in the more general framework of diffusion processes on $\R{d}$ with values in an open domain $D\subset\R{d}$ with fitness $f$ and volatility $A$. There exists an equilibrium density $\rho\in\vN{G}$ if and only if \eqref{condition-U} is satisfied. As a result, if $f$ is a linear or affine function, the equilibrium is obtained with a quadratic volatility satisfying \eqref{condition-U}. 
    
\end{rem}
\subsection{The one-dimensional case}

Assume first, as an example, that the biomass flow is represented by a one-dimensional diffusion process. The simplex $T^1$ is the interval $[0,1]$, the weight is $w_{\alpha,\beta}(x)=(1-x)^\alpha x^\beta$, $\alpha,\beta>-1$, and we consider \textit{Jacobi orthogonal polynomials} defined by:
\begin{equation}
    J_n^{\alpha,\beta}(x):=(1-x)^{-\alpha}x^{-\beta}\frac{d^n}{dx^n}\left [(1-x)^{n+\alpha}x^{n+\beta}\right].
\end{equation}
For $n=1$, we obtain $J_1^{\alpha,\beta}(x)=\beta+1-(\alpha+\beta+2)x$.

 In particular, assuming that $X_0=0$, for $J_1^{\alpha,\beta}$ we obtain
\[M^{J^{\alpha,\beta}_1}_t=-(\alpha+\beta+2)X_t+\int_0^t(\alpha+\beta+2)J^{\alpha,\beta}_1(X_s)ds.\]
As a result, simplifying the constant $(\alpha+\beta+2)$ we obtain that $X_t-\int_0^tJ_1^{\alpha,\beta}(X_s)ds$ is a martingale $M=\frac{M^{J^{\alpha,\beta}_1}}{\alpha+\beta+2}$ and so
\begin{equation}\label{ifitness}
    X^p_t=\int_0^tJ_1^{\alpha,\beta}(X_s)ds,\;(t\geq 0).
\end{equation} 

To establish the stochastic differential equation satisfied by diffusion $X$, we need to express martingale $M$ in terms of a Brownian motion. To do so, we use $J^{\alpha,\beta}_1$ as a drift term and define $\sigma^2(x)=2x(1-x)$, for all $x\in [0,1]$. Then, consider
the stochastic differential equation:
\begin{equation}\label{jacobi-1}
    X_t=X_0+\int_0^tJ_1^{\alpha,\beta}(X_s)ds+\int_0^t\sigma (X_s)dW_s,
\end{equation}
where $W$ is a Brownian motion. A straightforward application of It\^o's formula shows that for any twice-differentiable function $f:T^d\to\R{}$, it holds that
\begin{equation}
    f(X_t)=f(X_0)+\int_0^tJ_1^{\alpha,\beta}(X_s)f'(X_s)ds+\int_0^t\sigma (X_s)f'(X_s)dW_s+\frac{1}{2}\int_0^t\sigma^2(X_s)f''(X_s)ds
\end{equation}
and so
\[f(X_t)-f(X_0)-\int_0^tL_\gamma f(X_s)ds\]
is the martingale
\[\int_0^t\sigma (X_s)f'(X_s)dW_s.\]

In particular, for $f(x)=x$ yields that
\[X_t-\int_0^tJ_1^{\alpha,\beta}(X_s)ds=\int_0^t\sigma (X_s)dW_s.\]

Therefore, $\int_0^t \sigma^2(X_s)ds$ corresponds to the energy dissipated until time $t$,
\begin{equation}\label{efitness}
    \langle M\rangle_t=\int_0^t \sigma^2(X_s)ds.
\end{equation}

In summary, fitness is given by the couple $(X^p,\langle M\rangle)$ introduced earlier in \eqref{ifitness} and \eqref{efitness}. In addition, there is a unique invariant measure , which is indeed an equilibrium measure of the Markov semigroup. It is a Beta distribution with parameters $\alpha,\beta$ ($Beta(\alpha,\beta)$).

\section{The continuous open system dynamics of absolute abundances}\label{sec:ab-ab}

Now we deal with the  biomass flow $X=\seque{X}{t}{[0,\infty[}$ of the absolute abundance of a given biological entity undergoing a birth death process (i.e. genes, cells,  entire organisms, populations or some  trophic level in a food web), so that $X_t\in [0,\infty[$ for all $t\geq 0$. We will show two processes whose fitness structures fit the neutrality considerations, where the equilibrium measure is of Gibbs type. 

Consider the diffusion process \eqref{limit-diff} obtained as a limit in distribution, taking values in $[0,\infty[$, and consider a Hilbert space $L^2([0,\infty[,\vN{B}([0,\infty[),\mu)$, where $\vN{B}([0,\infty[)$ denotes the Borel $\sigma$-field and $\mu(dx)$ is a probability measure absolutely continuous with respect to the Lebesgue measure. As we have seen in the previous case, orthogonal polynomials appear as a natural orthonormal system in the $L^2([0,1])$. However, constructing a polynomial orthonormal basis on $L^2([0,\infty[,\vN{B}([0,\infty[),\mu)$ requires imposing conditions on the measure $\mu$. That problem was studied by Mazet \cite{Mazet}, who proved that a necessary and sufficient condition for the measure $\mu$ to have a polynomial orthonormal basis in $L^2([0,\infty[,\vN{B}([0,\infty[),\mu)$: is that there exists a number $\epsilon>0$ such that
\begin{equation}
    \int_0^\infty e^{\epsilon \abs{x}}\mu(dx)<\infty.
\end{equation}

So, the generator of the diffusion semigroup, in the homogeneous case, is given by the following operator $L$:
\begin{equation}\label{Generator-2}
    Lg(x)=\frac{1}{2}v(x)\frac{\partial^2}{\partial x^2}g(x)+f(x)\frac{\partial}{\partial x}g(x),
\end{equation}
for all $g\in C_0^2([0,\infty[)$.

The corresponding probability current is given by
\begin{equation}\label{current-prob-2}
    J(\rho,x)=f(x)\rho (x)-\frac{1}{2}\frac{\partial}{\partial x}\left( v(x)\rho (x)\right)=\frac{\rho(x)}{2}\left(2f(x)-\frac{\partial}{\partial x}v(x)-v(x)\frac{\partial}{\partial x}\log \rho (x)\right),
\end{equation}
and any density $\rho$ such that $J(\rho,x)=Constant$ for all $x\in [0,\infty[$ is stationary for the above dynamics. In particular, the equilibrium measure $\mu$ is obtained with a density $\rho$ such that
\begin{equation}\label{eq:current-eq}
 2f(x)-\frac{\partial}{\partial x}v(x)-v(x)\frac{\partial}{\partial x}\log \rho (x)=0.   
\end{equation}
\subsection{Fitness potential and scale function for a Gibbs density}

So, to have $\rho\in\vN{G}$, $U=\log \rho-\log Z$ is obtained from the equation
\begin{equation}\label{Potential}
    U(x)=-\log Z-\log v(x)+\int_{0+}^x\frac{2f(y)}{v(y)}dy.
\end{equation}

$\rho$ solves the equation $L_*\rho=0$. Looking for a positive harmonic function $S$ of the diffusion semigroup, one needs to solve the equation 
\begin{equation}
    LS(x)=\frac{1}{2}v(x)\frac{\partial^2}{\partial x^2}S(x)+f(x)\frac{\partial}{\partial x}S(x)=0
\end{equation}
That is, $S$ solves the linear ordinary differential equation
\begin{equation}\label{ODE}
  \frac{\partial^2}{\partial x^2}S(x)+\frac{2f(x)}{v(x)}\frac{\partial}{\partial x}S(x)=0,  
\end{equation}
so, given an initial point $x_0>0$ and a constant $C>0$, and assuming $v(x)>0$ is continuous, one obtains a harmonic function $S$, called the \textit{scale function}, given by
\begin{equation}
   S(x)=C\int_{x_0}^x\exp\left(-\int_0^z\frac{2f (y)}{v(y)}dy\right)dz. 
\end{equation}
Therefore, choosing the constant $C=1$, we have
\[\int_{0+}^x\frac{2f(y)}{v(y)}dy=-\log (\frac{dS}{dx}),\]
so that the fitness potential is related to the scale function by the relation
\begin{equation}
    U(x)=-\log Z-\log v(x)-\log (\frac{dS}{dx}).
\end{equation}
That is, the equilibrium density in terms of the scale function is given by
\begin{equation}
    \rho (x)=\frac{1}{Z}\left(v(x)\frac{dS}{dx}(x)\right)^{-1}.
\end{equation}
Here we assumed that our diffusion takes values in the open interval $]0,\infty[$. Given an arbitrary interval $]a,b[\subset ]0,\infty[$, let us introduce two hitting times $T_a,T_b$, where
\[T_a(\omega)=\inf\set{t\geq 0:X_t(\omega)=a},\;T_b=\inf\set{t\geq 0:X_t(\omega)=b}.\]
Then it has been shown \citep{Klebaner2006} Theorem 6.17 that
\begin{equation}
    \mathbb{P}_x(T_b<T_a)=\frac{S(x)-S(a)}{S(b)-S(a)}=\frac{\int_a^x\exp\left(-\int_{0+}^z\frac{2f(y)}{v(y)}dy\right)dz}{\int_a^b\exp\left(-\int_{0+}^z\frac{2f(y)}{v(y)}dy\right)dz}.
\end{equation}
\subsection{The relation with the Thermodynamics formalism}

It is worth relating our version of the Gibbs equilibrium measure within the thermodynamic formalism.

The equilibrium density can be expressed in thermodynamic Gibbs form by introducing
a free-energy functional. Let
\[
\rho(x)=\frac{1}{Z(\beta)}\exp\{-\beta \mathcal F(x)\},
\]
where \(\mathcal F\) denotes the free energy, \(\beta>0\) is the inverse
temperature, and
\[
Z(\beta)=\int_{\mathcal D}\exp\{-\beta \mathcal F(x)\}\,dx
\]
is the partition function.

Equivalently,
\[
\log \rho(x)=-\beta \mathcal F(x)-\log Z(\beta).
\]
Thus, in the notation \(U=\log\rho\), one has
\[
U(x)= -\beta \mathcal F(x)-\log Z.
\]

Therefore, the free energy is given, up to an additive constant, by
\[
\mathcal F(x)
=
-\frac{1}{\beta}
\int_0^x
\frac{2f(y)-v'(y)}{v(y)}\,dy\]
\[=\frac{1}{\beta}\left(\log \left(\frac{v(x)}{v(0)}\right)-\int_{0+}^x\frac{2f(y)}{v(y)}dy\right)
\]
\[=\frac{1}{\beta}\left(\log \left(\frac{v(x)}{v(0)}\frac{dS}{dx}(x)\right)\right)\]
\vskip 10pt

\[
\mu_{\mathrm{eq}}(dx)
=
\frac{1}{Z}
\exp\!\left\{
\int_0^x
\frac{2f(y)-v'(y)}{v(y)}\,dy
\right\}
dx
\]

\[
\mu_{\mathrm{eq}}(dx)
=
\frac{1}{Z}
e^{-\beta \mathcal F(x)}\,dx.
\]

Here below, we analyze two examples of symmetric diffusions on the positive real line: the Cox-Ingersoll-Ross process and the logistic model.

\subsection{Cox–Ingersoll–Ross (CIR) process}\label{sec:CIR}

When fitness is modeled as an affine function of the, now, absolute biomass $X$, $f(X_t)=a-bX_t$, and volatility is linear $\sigma^2(X_t)=\sigma^2 X_t$ (which regards  ``demographic fluctuations'', see \citep{azaele2006dynamical}), the resulting dynamics is a particular version of a Pearson diffusion \citep{forman2008pearson}, known as the Cox-Ingersoll-Ross (CIR) process, whose stochastic differential equation follows:
\begin{equation}\label{eq:CIR}
 X_t=X_0+\int_0^t(a-bX_s)ds+\sigma\int_0^t\sqrt{X_s}dW_s, 
\end{equation}
where $X_0\geq0$, and $a,b>0$.
Although this process is famous within the field of finance since \citep{cox1985theory}, it has also been considered a model for population growth \citep{feller1951diffusion} and as a neuronal model \citep{sacerdote2012stochastic}.

Whenever  $2a/\sigma^2\geq1$, the process stays positive and has a transition density that follows a non-central chi-square distribution, which has the analytical expression:

\begin{equation}\label{eq:cir-trans}
p(x_t|X_0=x)=w_te^{-u_t+v_t}\left(\frac{u_t}{v_t}\right)^{q/2}I_q(2\sqrt{u_tv_t}),
\end{equation}
for $t>0$, where $w_t=2b/\left(\sigma^2\left[1-e^{-bt}\right]\right)$, $u_t=w_txe^{-bt}$, $v_t=w_tx_t$, $q=2a/\sigma^2-1$ and $$I_q(\xi)=\sum_{k=0}^{\infty}\frac{1}{k!\Gamma(q+k+1)}\left(\frac{\xi}{2}\right)^{q+2k},$$ is the modified Bessel function of the first kind of order $q$.
The conditional mean and variance are given by:
\begin{equation}\label{eq:cir-mean-cond}
\mathbb{E}(X_t|X_0)=\frac{a}{b}+\left(X_0-\frac{a}{b}\right)e^{-bt}
\end{equation}
and
\begin{equation}\label{eq:cir-var-cond}
\mathbb{V}(X_t|X_0)=\frac{a\sigma^2}{2b^2}\left(1-e^{-bt}\right)^2+X_0\frac{\sigma^2}{b}\left(1-e^{-bt}\right)e^{-bt},
\end{equation}
respectively (see \citep{dyrting2004evaluating}). Also, it has a stationary invariant distribution given by:
\begin{equation}\label{eq:gamma-density}
\rho(dx)=\frac{\lambda^{\alpha}}{\Gamma(\alpha)}x^{\alpha-1}e^{-\lambda x}dx,
\end{equation}
with shape $\alpha=2a/\sigma^2$ and rate $\lambda=2b/\sigma^2$  (a $Gamma(2a/\sigma^2,2b/\sigma^2)$; see \citep{ditlevsen2012introduction}, \citep{leonenko2012high}), which has provided good fits for some abundance data (see, e.g., \citep{dennis1984gamma,schmidt1985species, engen1996population, diserud2000general,lande2003stochastic,azaele2006dynamical}). Notice that $\rho(\cdot)$ satisfying $\mu(dx)=\rho(x)dx$ in \eqref{eq:gamma-density} solves \eqref{eq:current-eq} with $f(x)=a-bx$ and $v(x)=\sigma^2x$, and thus, it is a equilibrium density.

When considering $K$ species, we write:
\begin{equation}\label{eq:CIR-several}
dX_i(t)=(a_i-bX_i(t))dt+\sigma\sqrt{X_i(t)}dW_i(t),
\end{equation}
for $i=1, ..., K$, where we assume that the standard Brownian motions, $W_1, ..., W_K$, are independent. 
Note that although the parameter $a$ (which is a parameter that relates the long-term mean growth of biomass) can vary between species through \textit{immigration}, the components of the two-variable fitness are functionally equivalent. Moreover, the independence between Brownian motions sets a long-term  independence between species; therefore, the abundance
variation between them is characteristic of the open system dynamics, which also contains everything that is not specified in the model  (e.g., dispersal, disturbance, and so on). Thus, we can say that this abundance dynamics is given from a neutral perspective (\citep{hubbell2005neutral}, \citep{hubbell2011unified}). 

Also, if we assume that at the time of observing this dynamic, the process is already in a steady state, which means that $X_i(0)\overset{ind}{\sim}Gamma(2a_i/\sigma^2,2b/\sigma^2)$, $i=1, ..., K$ (i.e., the $K$ species start independently with their respective stationary distributions),  by stationarity, for any $t\geq0$, $X_i(t)\overset{ind}{\sim}Gamma(2a_i/\sigma^2,2b/\sigma^2)$, $i=1, ..., K$ and   $X_{i}(t)/S(t)$  follows a $Beta(2a_i/\sigma^2,\sum_{j\neq i}^{K}2a_j/\sigma^2)$ and is independent of $S(t)$, where $S(t)=\sum_{i=1}^KX_i(t)$ (\citep{lukacs1955characterization}).  In this case, we have a result that goes even further: in terms of trajectories, this kind  of transformation will lead to  Jacobi processes for the relative abundances, after a time-change. This result was proved in \citep{gourieroux2006multivariate}, which will be shown in the following, using $K=2$.

Consider $Z_1=f_1(X_1,X_2)=X_1/[X_1+X_2]$. Then 
\[
\frac{\partial f_1}{\partial x_1}(X_1,X_2)=\frac{1-Z_1}{X_1+X_2}, \quad \frac{\partial f_1}{\partial x_2}(X_1,X_2)=\frac{-Z_1}{X_1+X_2};
\]
\[
\frac{\partial^2 f_1}{\partial x_1^2}(X_1,X_2)=\frac{-2(1-Z_1)}{(X_1+X_2)^2}\text{,  and  } \frac{\partial^2 f_1}{\partial x_2^2}(X_1,X_2)=\frac{2Z_1}{(X_1+X_2)^2},
\]
and applying Itô's formula we obtain:
\[
dZ_1(t)=\sum_{i=1}^{2}\frac{\partial f_1}{\partial x_i}(X_1(t),X_2(t))dX_i(t)+\frac{\sigma^2}{2}\sum_{i=1}^{2}\frac{\partial^2 f_1}{\partial x_i^2}(X_1(t),X_2(t))X_i(t)dt
\]
\[
=\frac{-\tilde{a}}{X_1(t)+X_2(t)}(Z_1(t)-\tilde{b})dt+\frac{\sqrt{\sigma^2Z_1(t)(1-Z_1(t))}}{\sqrt{X_1(t)+X_2(t)}}\left(\sqrt{1-Z_1(t)}dW_1(t)-\sqrt{Z_1(t)}dW_2(t)\right),
\]
with $\tilde{a}=a_1+a_2$ and $\tilde{b}=a_1/(a_1+a_2)$. Recall Itô's isometry: for a process $H=(H_t)_{t\in\mathbb{R}_+}$ measurable and square-integrable and a  Brownian motion $W$, for any $t\geq0$ we have $\mathbb{E}\left(\int_{0}^tH_sdW_s\right)^2=\mathbb{E}\left(\int_{0}^tH_s^2ds\right)$. Now, notice that the term \[\tilde{W}(t):=\int_{0}^{t}\left(\sqrt{1-Z_1(s)}dW_1(s)-\sqrt{Z_1(s)}dW_2(s)\right)\] actually defines a Brownian motion: it also defines a local martingale (with respect to adequate filtration)  with $\mathbb{E}\left(\tilde{W}(t)\right)=0$ and  \[\mathbb{E}\left(\tilde{W}(t)^2\right)=\mathbb{E}\left(\int_{0}^t\sqrt{1-Z_1(s)}dW_1(s)\right)^2+\mathbb{E}\left(\int_{0}^t\sqrt{Z_1(s)}dW_2(s)\right)^2=t,\] by Itô's isometry (formally, it follows by Lévy's \textit{characterization of Brownian Motion} since $\langle \tilde{W}\rangle_t=t$, and then $\left(\tilde{W}(t)^2-t\right)_{t\in\mathbb{R}_+}$ is a continuous local martingale, with respect to adequate filtration). Hence, the SDE for $Z_1$ reads:
\[
dZ_1(t)=\frac{-\tilde{a}(Z_1(t)-\tilde{b})}{S(t)}dt+\frac{\sqrt{\sigma^2Z_1(t)(1-Z_1(t))}}{\sqrt{S(t)}}d\tilde{W}(t),
\]
where $S=X_1+X_2$. Notice that  $S$  follows a CIR process:
\[
dS(t)=(\tilde{a}-bS(t))dt+\sigma\sqrt{S(t)}\left(\sqrt{Z_1(t)}dW_1(t)+\sqrt{Z_2(t)}dW_2(t)\right),
\]
where the term $W^*(t):=\int_{0}^{t}\left(\sqrt{Z_1(s)}dW_1(s)+\sqrt{Z_2(s)}dW_2(s)\right)$ defines again a  Brownian motion by an analogous argument to the above. 

Finally, by applying the time-change and the new variable
\[
\tau=\phi(t)=\int_{0}^{t}\frac{1}{S(s)}ds,\qquad\left(\mathcal{Z}_1(\tau)=Z_1(\phi^{-1}(\tau))\right)_{\tau\in\mathbb{R}_+},
\]
we find that the process $\mathcal{Z}_1$ satisfies:
\begin{align*}
\mathcal{Z}_1(\tau) &= Z_1(t)
\\
&=Z_1(0)+\int_{0}^{t}\frac{-\tilde{a}(Z_1(s)-\tilde{b})}{S(s)}ds+\sigma\int_{0}^{t}\frac{\sqrt{Z_1(s)(1-Z_1(s))}}{\sqrt{S(s)}}d\tilde{W}(s)
\\
&=\mathcal Z_1(0)+\int_{0}^{\phi^{-1}(\tau)}\frac{-\tilde{a}(Z_1(s)-\tilde{b})}{S(s)}ds+\sigma\int_{0}^{\phi^{-1}(\tau)}\frac{\sqrt{Z_1(s)(1-Z_1(s))}}{\sqrt{S(s)}}d\tilde{W}(s).
\end{align*}
In the first integral, we use the change of variables $r=\phi(s)$ to find
\[
\int_{0}^{\phi^{-1}(\tau)}\frac{-\tilde{a}(Z_1(s)-\tilde{b})}{S(s)}ds = \int_{0}^{\tau}-\tilde{a}(Z_1(\phi^{-1}(r))-\tilde{b})dr = \int_{0}^{\tau}-\tilde{a}(\mathcal Z_1(r)-\tilde{b})dr,
\]
and in the second integral
\begin{align*}
\int_{0}^{\phi^{-1}(\tau)}\frac{\sqrt{Z_1(s)(1-Z_1(s))}}{\sqrt{S(s)}}d\tilde{W}(s)
&= \int_0^\tau \sqrt{Z_1(\phi^{-1}(r))(1-Z_1(\phi^{-1}(r)))}d\tilde{W}(r)
\\
&= \int_0^\tau \sqrt{\mathcal Z_1(r)(1-\mathcal Z_1(r))}d\tilde{W}(r),
\end{align*}
summarizing
\begin{align*}
\mathcal Z_1(\tau)&=\mathcal{Z}_1(0)-\int_{0}^{\tau}\tilde{a}(\mathcal{Z}_1(r)-\tilde{b})dr+\sigma\int_{0}^{\tau}\sqrt{\mathcal{Z}_1(r)(1-\mathcal{Z}_1(r))}d\tilde{W}(r),
\end{align*}
which is an equation following \eqref{jacobi-1} with $\tilde{a}=\alpha+\beta+2$, $\tilde{b}=(\beta+1)/(\alpha+\beta+2)$ and $\sigma^2=2$.

This result can be generalized for more species ($K\geq2$), where the resulting relative abundance processes can be arranged to be a multivariate Jacobi process, whose joint invariant distribution follows a Dirichlet distribution, as established in Theorem \ref{th:dirch-inv}, for $d=K$ (see \citep{gourieroux2006multivariate}).

\begin{rem}
Actually, the above result can be set under weaker conditions. It is enough for $X_i's$ to start with positive biomass and $2a_i/\sigma^2\geq1$. This will guaranty that the whole community $S(\cdot)$ stays strictly positive with probability one (specifically, it will follow a strictly positive CIR; see, e.g., \citep{mao2007stochastic}, p. 308), and therefore, the integral in the time-change $\tau=\phi(t)=\int_0^tS(s)^{-1}ds$ will be  well defined. Moreover, by the ergodic property of $S(\cdot)$, we have $(1/t)\int_0^tS(s)^{-1}ds\rightarrow\int_0^{\infty}x^{-1}\rho_{S}(x)dx$, almost surely, when $t\rightarrow\infty$, where $\rho_{S}(\cdot)$ is the corresponding ergodic invariant Gamma density of the process $S(\cdot)$, implying that $\phi(t)\rightarrow\infty$, almost surely, when $t\rightarrow\infty$, which must be considered a suitable time.
\end{rem} 

In Section \ref{sec:stat-cir} we show an estimation procedure based on the trajectories instead of using the probability transitions \eqref{eq:cir-trans}, since the latter has an analytical expression and, under this, we have to approximate it. Given that the state space is unbounded, we can use a convenient transformation of the original process, and then build a Radon-Nikodym derivative to further derive the parameter estimators via the maximum likelihood ratio.

\subsection{The stochastic logistic process}\label{sec:logistic}

We can consider another ecological derivation under the assumption of neutrality, which also assumes that the species  are functionally equivalent, and the abundance variation between them is mainly due to variability in the noise. Consider the biomass abundance of $K$ species following a so-called \textit{stochastic logistic dynamics} (\citep{pasquali2001stochastic}), given by:
\begin{equation}\label{eq:st-logistic}
dY_i(t)=[r_i-\eta Y_i(t)]Y_i(t)dt+\sigma Y_i(t)dW_i(t),    
\end{equation}
for each $i=1,...,K$, with $Y_i(0)>0$, where $r_i>0$ is the intrinsic mean growth rate of species $i$ and $\eta>0$ is the common species-specific strength of density dependence.  The $W_i$'s are mutually independent Brownian motions and $\sigma>0$ is the species-common diffusion coefficient. These can be interpreted as environmental fluctuations around the $r_i$'s (\citep{may2001stability}), as we will see below. Anyway,  such an equation \eqref{eq:st-logistic} has an explicit solution, given by:
\begin{equation}\label{eq:exac-sol-SL}
Y_i(t)=\exp\left(\left[r_i-\frac{\sigma^2}{2}\right]t+\sigma W_i(t)\right)\left[\frac{1}{Y_i(0)}+\eta\int_{0}^{t}\exp\left(\left[r_i-\frac{\sigma^2}{2}\right]s+\sigma W_i(s)\right)ds\right]^{-1}
\end{equation}
(see \citep{arato2003famous}).

Such common parameters and functional equivalent growths regard to a fitness equivalence between species, which are ``interacting'' independently, thus producing a neutral model of species abundance dynamics. Additionally, these processes have a stationary distribution given by a $Gamma(u_i,v)$, whose probability density is
\begin{equation}\label{eq:gamma-invariant}
\rho_i(dy)=\frac{y^{u_i-1}e^{-vy}v^{u_i}}{\Gamma(u_i)}dy,
\end{equation}
with shape $u_i=2r_i/\sigma^2-1$ and rate $v=2\eta/\sigma^2$, whenever  $2r_i/\sigma^2>1$, for all $i=1,...,K$. As mentioned previously in \ref{sec:CIR}, the Gamma distribution as a model for species abundance has been documented as appropriate for some cases (see, e.g., \citep{dennis1984gamma}, \citep{schmidt1985species}, \citep{engen1996population}, \citep{diserud2000general}, \citep{azaele2006dynamical}).
Notice they share the stationary density of the CIR model \eqref{eq:CIR-several}, taking $a_i=r_i-\sigma^2/2$ and $b=\eta$
(see \citep{azaele2006dynamical}). Notice also that for each $i$, $\rho_i(\cdot)$ satisfying $\mu_i(dx)=\rho_i(x)dx$ in \eqref{eq:gamma-invariant} solves \eqref{eq:current-eq} with $f(x)=(r_i-\eta x)x$ and $v(x)=\sigma^2 x^2$, and thus it is an equilibrium density.
However, the stochastic logistic process \eqref{eq:st-logistic}  contains biologically relevant quantities in which fitness volatility appears as an explicit diffusion of species fitness. Specifically, let $R_i(t)$ be the intrinsic growth rate of species $i$ at time $t$, and assume that it is a semimartingale satisfying:
\[
dR_i(t)=r_idt+\sigma dW_i(t).
\]
Then \eqref{eq:st-logistic} is equivalent to the \textit{semimartingale-driven} stochastic differential equation:
\[
dY_i(t)=-\eta Y_i(t)^2dt+Y_i(t)dR_i(t).
\]

Now, if we assume that all the processes $Y_i$'s start from independent $Gamma(u_i,v)$'s (i.e., we consider stationarity at the beginning, as we set in the previous case), then, for each $i=1,...,K$ and $t\geq0$, the relative abundances are as follows.
\begin{equation}\label{eq:relative-abundance}
Z_i(t)=\frac{Y_i(t)}{\sum_{j=1}^{K}Y_j(t)}\sim Beta(u_i,\sum_{j=1}^{K}u_j-u_i).
\end{equation}
In general, for any $t\in\mathbb{R}_+$, $\mathbf{Z}(t)=(Z_1(t), ...., Z_k(t))^\top$ follows the invariant law given in Th. \ref{th:dirch-inv} with $d=K$.

In Section \ref{sec:stat-sl}, and under the same arguments given for the CIR case, we also show an estimation procedure based on the trajectories, using a convenient transformation first of the original process, and then,  building  a Radon-Nikodym derivative, to further derive the parameter estimators via maximum likelihood ratio as well. 

\section{Statistical methods}\label{sec:stat-meth}

\subsection{The Jacobi process}\label{sec:stat-jac}

We will start by proceeding with the statistical analysis under the Jacobi process for $d=1$. For such purposes, it is convenient to rewrite Equation \eqref{jacobi-1} with the following reparameterization:
\begin{equation}\label{eq:jacobi-repar}
dX_t=-a(X_t-b)dt+\sqrt{2X_t(1-X_t)}dW_t,
\end{equation}
where $a=\alpha+\beta+2>0$ and $b=(\beta+1)/(\alpha+\beta+2)\in(0,1)$. This equation has an invariant stationary distribution,  given by the Beta distribution with parameters $ab=\beta+1$ and $a(1-b)=\alpha+1$, $Beta(\beta+1,\alpha+1)$ (see \citep{gourieroux2006multivariate}).
A quasi-likelihood method can be applied to estimate the parameter $\theta=(\alpha,\beta)$ based on the spectral decomposition of the  generator: for twice differentiable functions $f$ defined on the one-dimensional simplex, it will be given by:
\begin{equation}\label{generator-repar}
\mathcal{L}f(x)=-a(x-b)\frac{\partial f}{\partial x}(x)+x(1-x)\frac{\partial^2 f}{\partial x^2}(x).    
\end{equation}
This generator admits a spectral decomposition with a set of eigenvalues $\{\lambda_n\}_{n\in\mathbb{N}}$ and eigenvectors $\{P_n\}_{n\in\mathbb{N}}$, such that:
\begin{equation}\label{eq:spectral}
\mathcal{L}P_n=\lambda_nP_n,
\end{equation}
for all $n\in\mathbb{N}$. The eigenvalues are given by $\lambda_n=-an-n(n-1)=-(\alpha+\beta+2)n-n(n-1)$ and the eigenvectors by:
\[
P_n(x)=\left(\frac{\Gamma(\beta+n+1)(2n+\alpha+\beta+1)\Gamma(\alpha+1)\Gamma(\beta+1)}{n!\Gamma(\alpha+\beta+n+1)\Gamma(\alpha+\beta+2)\Gamma(\alpha+n+1)}\right)^{1/2}
\]
\begin{equation}\label{eigenfunctions}
\times\sum_{m=0}^{n}(-1)^{m}\binom{n}{m}\frac{\Gamma(\alpha+\beta+n+m+1)}{\Gamma(\beta+m+1)}x^m,
\end{equation}
which are the Jacobi polynomials (see \cite{wong1964construction}). This is a case of the spectral decomposition given in  Theorem \ref{Th_Dunkl-Xu} under $d=1$, $\gamma_1=\beta$ and $\gamma_2=\alpha$.

Now, in order to describe an estimation method for $\theta$, we can rely on some procedures  presented in the works \citep{gourieroux2004estimation} and \citep{valery2011quasi} (see also \citep{kessler1999estimating}, \citep{larsen2007diffusion}). Specifically, we will describe a quasi-likelihood method based on the spectral decomposition \eqref{eq:spectral}. Here, we consider that we  observe the process \eqref{eq:jacobi-repar} in discrete times, at equal lags, which can be assumed as 1. The transition density, $\{p(x_{t}|x_{t-1};\theta)\}_t$, is needed to compute such a quasi-likelihood method. However, this transition density has no closed form expression, for which we need an approximate form of this (though, an alternative to the maximum likelihood estimation approach for intractable transition densities for discretely observed diffusions can be found in
\citep{ait2002maximum}; but here we will only present the case to be described, which refers to the specific case of the Jacobi process). For inferential purposes, we will write the polynomials \eqref{eigenfunctions} as $P_n(x)=P_n(x;\theta)$, and the eigenvalues as $\lambda_n=\lambda_n(\theta)$.

An approximation of the above transition density via the spectral decomposition is given by:
\begin{equation}\label{spectral-app}
p_N(x_{t}|x_{t-1};\theta)=p(x_{t};\theta)\left(1+\sum_{n=1}^{N}\exp(\lambda_n(\theta))P_n(x_{t};\theta)P_n(x_{t-1};\theta)\right),
\end{equation}
where $p(x;\theta)$ is the $Beta(\beta+1,\alpha+1)$ density, and $$\lim_{N\rightarrow\infty}p_N(x_{t}|x_{t-1};\theta)=p(x_{t}|x_{t-1};\theta).$$ 
So, the maximum likelihood estimator of $\theta$, given by $$\hat{\theta}_{T}^{ML}=\text{arg max}_{\theta}\sum_{t=1}^{T}\log(p(x_{t}|x_{t-1};\theta)),$$ will be replaced by considering the quasi-likelihood function, $$\hat{\theta}_{T,N}^{MQL}=\text{arg max}_{\theta}\sum_{t=1}^{T}\log(p_N(x_{t}|x_{t-1};\theta)).$$

\begin{rem}
Notice that, for this method, it is assumed that we are starting from its stationary distribution.
\end{rem}

There is no closed-form solution for the normal equations of $$\sum_{t=1}^{T}\log(p_N(x_{t}|x_{t-1};\theta))$$ to obtain explicitly $\hat{\theta}_{T,N}^{MQL}$. Thus, numerical methods have to be implemented. In such methods, initial values (or seeds) are needed, which can be chosen by some ``pre-estimation'' of the parameter involved. As the Jacobi process is ergodic and compact supported, trajectorial averages converges to the corresponding invariant averages for ``large times''; therefore, some preliminary trajectorial average estimates can be used to set the referred seeds. In fact, these ``pre-estimates'' are already close to the true values (see Figure \ref{fig:Jacobi}).

Specifically, let $X_{[0,T]}$ be the solution of the process \eqref{eq:jacobi-repar} up to a time horizon $T$. Let $u_T=\text{mean}(X_{[0,T]})$ and $v_T^2=\text{var}(X_{[0,T]})$ the trajectorial average and variance of $X_{[0,T]}$, respectively.
Given that the stationary distribution of  \eqref{eq:jacobi-repar} follows a $Beta(s_1,s_2)$, with $s_1=\beta+1$ and $s_2=\alpha+1$, and it is ergodic (in particular, it implies that the trajectorial moments converge to the moments w.r.t. the stationary distribution, when $T\rightarrow\infty$),  we have that the trajectorial moment estimators for $\theta=(\alpha,\beta)$ are obtained by solving  the equations:
\begin{equation}\label{moment-traj}
s_1 =\left(\frac{1-u_T}{v_T^2} - \frac{1}{u_T}\right)u_T^2,\quad s_2 =s_1\left(\frac{1}{u_T}-1\right),
\end{equation}
for a ``large'' $T>0$. For the cases simulated in Figure \ref{fig:Jacobi}, the trajectorial moment estimators were $\hat{\theta}=(\hat{\alpha},\hat{\beta})=(10.16, 6.40)$ and $\hat{\theta}=(\hat{\alpha},\hat{\beta})=(1.16, 4.06)$, for the top processes, from left to right, respectively; and $\hat{\theta}=(\hat{\alpha},\hat{\beta})=(9.33, 5.54)$ and $\hat{\theta}=(\hat{\alpha},\hat{\beta})=(1.05, 3.09)$,  for the bottom processes, from left to right, respectively. So, such ``pre-estimates'' can give us good starting points to subsequently applied the quasi-likelihood method described above.

\begin{figure}[h!]
\centering
\resizebox{6cm}{!}{\includegraphics{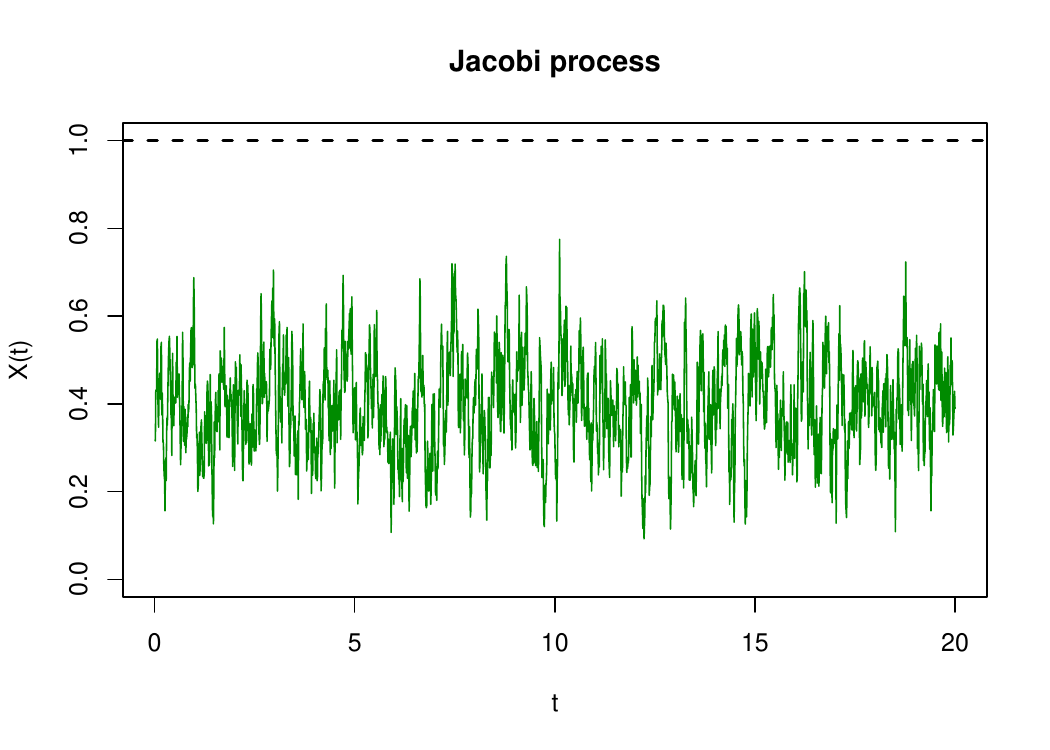}}
\resizebox{6cm}{!}{\includegraphics{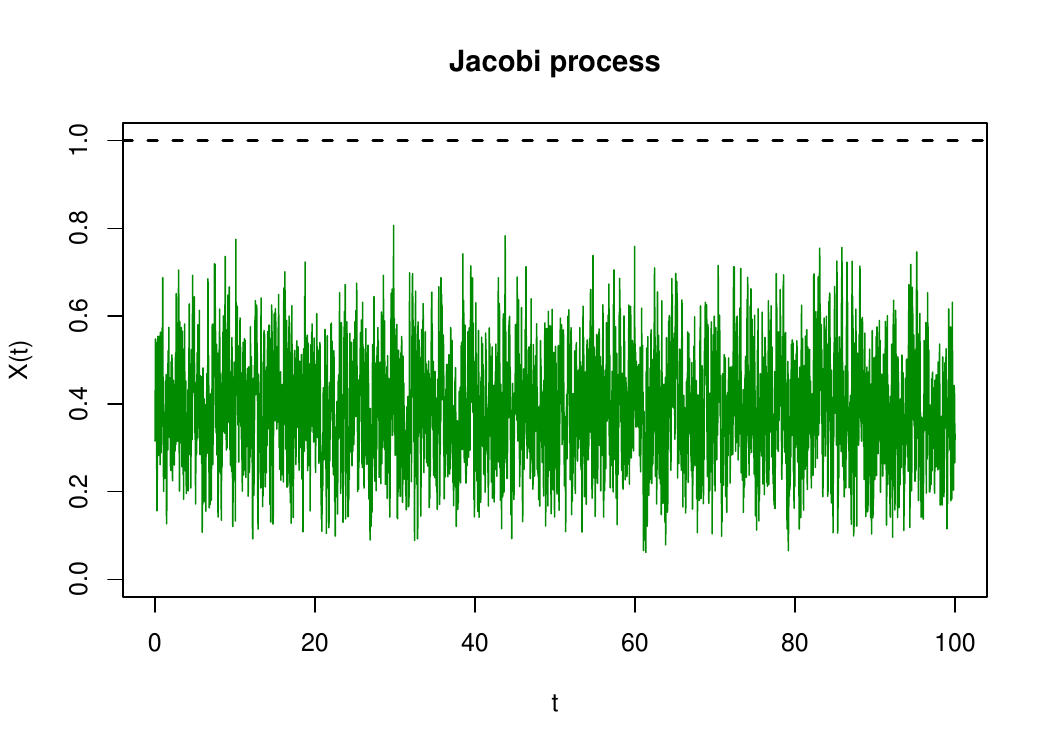}}
\resizebox{6cm}{!}{\includegraphics{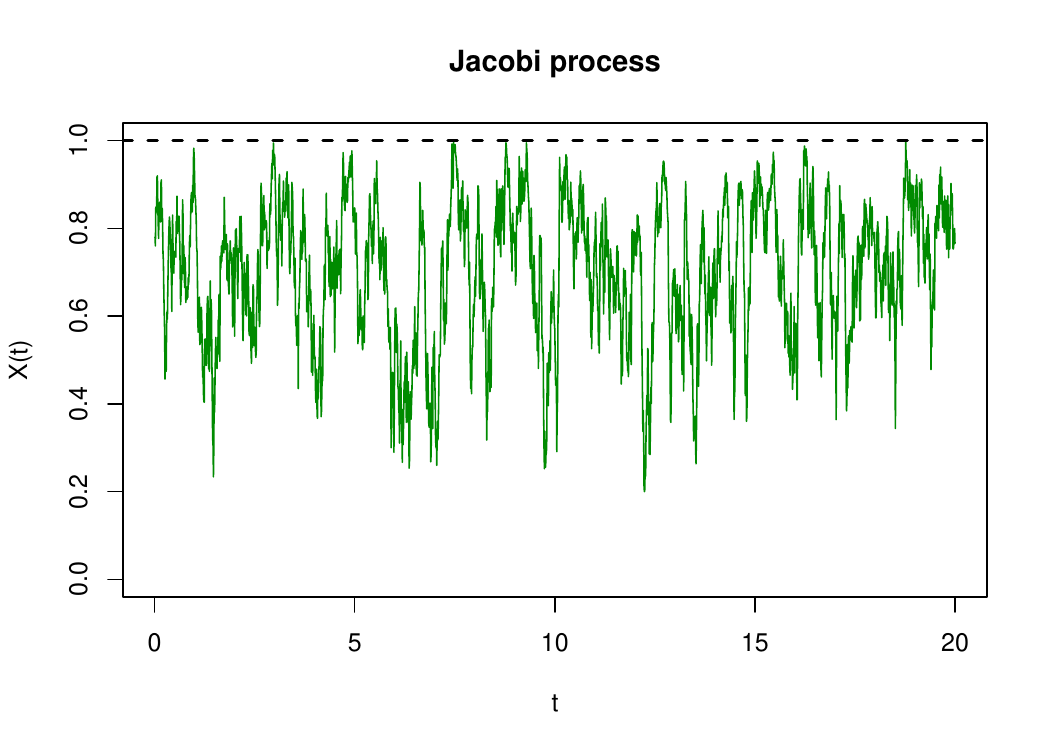}}
\resizebox{6cm}{!}{\includegraphics{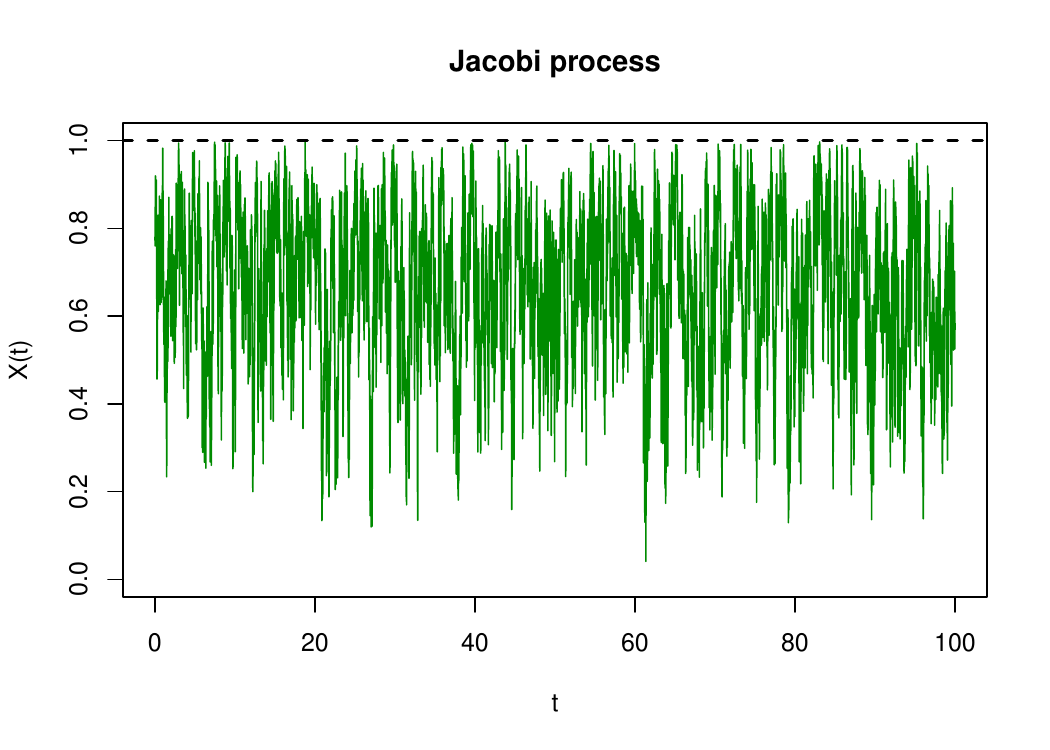}}
\caption{Simulations of Jacobi processes following Equation \eqref{eq:jacobi-repar}, by using Euler-Maruyama method with discretizations $dt\approx \delta=0.01$. Top: (left) time horizon $T=20$ and $\theta=(\alpha,\beta)=(10,6)$; (right) time horizon $T=20$ and $\theta=(\alpha,\beta)=(1,3)$. Bottom: the same as the top figures, but with a time horizon $T=100$. All the initials stating points for the simulated process $X$ were taken as random from its corresponding stationary distribution.}
\label{fig:Jacobi}
\end{figure}

The simulation of the process, together with the implemented quasi-likelihood method, whose initial values were taken from Equations \eqref{moment-traj}, can be found in \textcolor{blue}{https://github.com/RplazaValpo/Estimation-Jacobi-Process}, for \textbf{R} software. Here we used $N=5$ in \eqref{spectral-app}. The numerical optimization algorithm used was the \textbf{L-BFGS-B}, which is a quasi-Newton method that presents an improvement in accuracy, especially when the number of variables to be optimized is high; see \citep{byrd1995limited}.
For our simulated cases, the quasi-likelihood estimates were $\hat{\theta}_{T,N}^{MQL}=(\hat{\alpha}_{T,N}^{MQL},\hat{\beta}_{T,N}^{MQL})=(10.04, 6.32)$ and $\hat{\theta}_{T,N}^{MQL}=(\hat{\alpha}_{T,N}^{MQL},\hat{\beta}_{T,N}^{MQL})=(1.11, 3.98)$, for the top processes, from left to right, respectively; and $\hat{\theta}_{T,N}^{MQL}=(\hat{\alpha}_{T,N}^{MQL},\hat{\beta}_{T,N}^{MQL})=(9.27, 5.50)$ and $\hat{\theta}_{T,N}^{MQL}=(\hat{\alpha}_{T,N}^{MQL},\hat{\beta}_{T,N}^{MQL})=(1.00, 2.98)$,  for the bottom processes, from left to right, respectively. Thus, in almost all the cases considered, these estimates had a little improvement with respect to the corresponding moment estimators.

In the case of Jacobi processes for higher dimensions, it is difficult to derive a tractable approximation of the transition density as in \eqref{generator-repar} (\citep{gourieroux2006multivariate}). Thus, we cannot directly obtain  a quasi-likelihood method as in the previous case. However, given that the process is ergodic, with invariant probability measure following a Dirichlet (see Th. \ref{th:dirch-inv}), some approach can be done with the trajectorial marginals, in which, under stationarity,  each marginal has a Beta distribution with parameters $s_1^{(i)}=\gamma_i+1$ and $s_2^{(i)}=\langle \gamma,\1\rangle+d-\gamma_i$ (see also \citep{ng2011dirichlet}).

\subsection{The CIR process}\label{sec:stat-cir}

As shown in \eqref{eq:cir-trans}, to estimate their parameters through the transition probabilities, we also have to obtain some approach to these. However, given the unboundedness of the state space, we can apply some transformation of the original process to obtain a \textit{likelihood ratio}, and then get the parameter estimates.

Let $X$ be a stationary CIR process following \eqref{eq:CIR}. Let  $Z=f(X)=2X^{1/2}$. Then, by applying Itô's formula, we obtain:
\begin{equation}\label{eq:ito-cir}
dZ(t)=G(Z(t))dt+\sigma dW(t),
\end{equation}
with $G(z)=(1/z)(2a-bz^2/2-\sigma^2/2)$. Thus, the parameter $\sigma^2$ can be exactly obtained by the continuity of the trajectories of \eqref{eq:ito-cir}: for any $t>0$, we have
\[
V^p(Z(t))=\langle Z-Z^p \rangle_t = \sigma^2 t,
\]
where
\[
\sum_{k=1}^{2^n}\left(Z(tk/2^n)-Z(t(k-1)/2^n\right)^2\overset{\mathbb{P}}{\longrightarrow}\langle Z-Z^p \rangle_t,
\]
(convergence in the probability sense) when $n\rightarrow\infty$. Thus, if we observe the process discretely  over a fixed time window $[0,T]$, an estimator of $\sigma^2$ is given by:
\begin{equation}\label{eq:sigma-est}
\widehat{\sigma^2}=\frac{1}{T}\sum_{k=1}^{N}\left(Z(t_k)-Z(t_{k-1})\right)^2,
\end{equation}
where $t_0=0$ and $t_N=T$. So, from now on, we will proceed as if $\sigma^2$ were known, so the parameter to be estimated will be $\vartheta=(a,b)$.

That being said, let  $\mathbb{P}^T_{Z}$ and $\mathbb{P}^T_{B}$ the probability measures induced by  $(Z(t))_{t\in[0,T]}$ and  $(\sigma W(t))_{t\in[0,T]}$, respectively. We see that if we set $Z(0)=z_0>0$ (which is implied by setting $X(0)=x_0>0$), $\mathbb{P}^T_{Z}$ is absolutely continuous with respect to $\mathbb{P}^T_{B}$, and   the Radon-Nikodym derivative of $\mathbb{P}^T_{Z}$ w.r.t $\mathbb{P}^T_{B}$ is given by:
\begin{equation}\label{eq:R-N-cir}
\frac{d\mathbb{P}^T_{Z}}{d\mathbb{P}^T_{B}}=\exp\left(\int_0^T\frac{G(Z(t))}{\sigma^2}dZ(t)-\frac{1}{2}\int_0^T\frac{G(Z(t))^2}{\sigma^2}dt\right)=:L_T(\vartheta; Z).
\end{equation}
We can regard this as a likelihood ratio, and thus the maximum likelihood ratio estimator of $\vartheta=(a,b)$, is obtained by maximizing the right-hand side of \eqref{eq:R-N-cir} (in practice, its logarithm) with respect to $a$ and $b$ (see Theorem 1.12 in \citep{kutoyants2004statistical}). Notice that, for its well-posedness, it must be satisfied:
\begin{equation}\label{eq:cond-RD-CIR}
\mathbb{P}\left(\int_0^T\frac{G(Z(t))^2}{\sigma^2}dt<\infty\right)=1,
\end{equation}
for any $T>0$, which holds under the \textit{Feller condition}: $2a/\sigma^2>1$  (see \citep{mao2007stochastic}, p. 308), implying that $\mathbb{P}(X(t)>0, t\geq 0)=1$, and so $(Z(t)>0, t\geq 0)$ does as well. 

So,  the maximum likelihood-ratio estimator for $\vartheta=(a,b)$ is given by:
\begin{equation}\label{eq:ML-CIR}
(\hat{a}_{ML}, \hat{b}_{ML})=\text{argmax}_{\vartheta\in \mathbb{R}_+\times\mathbb{R}_+} l_T(\vartheta; Z),
\end{equation}
where $l_T(\vartheta; Z)=\log L_T(\vartheta; Z)$.

However, there are some simple approaches whose effectiveness lies in the assumption of ergodicity: given that its stationary distribution of $X(t)$ from \eqref{eq:CIR}  follows a $Gamma(2a/\sigma^2,2b/\sigma^2)$, for any $t\geq0$, we can use its trajectorial moments to obtain the moments equations over a time horizon $[0,T]$:
\[
\frac{a}{b}=\text{mean}(X_{[0,T]})=\frac{1}{T}\int_{0}^{T}X(s)ds, \]
and 
\[\frac{a\sigma^2}{2b^2}=\text{var}(X_{[0,T]})\]
\[=\frac{1}{T}\int_{0}^{T}[X(s)-\text{mean}(X_{[0,T]})]^2ds
\]
\[
=\frac{1}{T}\int_{0}^{T}X(s)^2ds-\left(\frac{1}{T}\int_{0}^{T}X(s)ds\right)^2,
\]
where $a/b$ and $a\sigma^2/(2b^2)$ are the limits of \eqref{eq:cir-mean-cond} and \eqref{eq:cir-var-cond} when $t\rightarrow\infty$, respectively. Solving these equations for $a$ and $b$, we obtain their moment estimates, $\hat{a}_{MM}$ and $\hat{b}_{MM}$. Their are given by:
\begin{equation}\label{eq:a-mm}
\hat{a}_{MM}=\frac{\hat{b}_{MM}}{T}\int_{0}^{T}X(s)ds,
\end{equation}
and 
\begin{equation}\label{eq:b-mm}
\hat{b}_{MM}=\frac{\widehat{\sigma^2}\int_{0}^{T}X(s)ds}{2T\left[\frac{1}{T}\int_{0}^{T}X(s)^2ds-\left(\frac{1}{T}\int_{0}^{T}X(s)ds\right)^2\right]},
\end{equation}
which are simple expressions to approach. By ergodicity (which is guaranteed by the above Feller condition), such estimates converge to the true parameters when $T\rightarrow\infty$.

Despite the simplicity of the above method of moments, the maximum likelihood ratio method is generally preferable, as it tends to be more efficient than the method of moments in this type of ergodic diffusion processes (see \citep{kutoyants2004statistical}).

\subsection{The SL process}\label{sec:stat-sl}

Here, we proceed with the parameter estimation of a single stationary logistic process by relying on methods based on the observable trajectories instead of the transition density approach. First, we can transform our single logistic equation,
\begin{equation}\label{eq:single-logistic}
dY(t)=[r-\eta Y(t)]Y(t)dt+\sigma Y(t)dW(t),
\end{equation}
into a diffusion process on $\mathbb{R}$ by considering $Z=\log(Y)$ instead of $Y$, and applying the Itô formula:
\begin{equation}\label{eq:log-logistic}
dZ(t)=H(Z(t))dt+\sigma dW(t),
\end{equation}
with $H(z)=r-\eta e^z-\sigma^2/2$. Here, the parameter $\sigma^2$ can also be obtained exactly by the continuity of the trajectories, and then we use \eqref{eq:sigma-est} to estimate its position. Given this, our parameter of interest is  $\vartheta=(r,\eta)$.

Now, we proceed in analogous way as in \ref{sec:stat-cir}:  Let  $\mathbb{P}^T_{Z}$ and $\mathbb{P}^T_{W}$ the probability measures induced by  $(Z(t))_{t\in[0,T]}$ and  $(\sigma W(t))_{t\in[0,T]}$, respectively. We find that $\mathbb{P}^T_{Z}$ is absolutely continuous with respect to $\mathbb{P}^T_{W}$, and   the Radon-Nikodym derivative of $\mathbb{P}^T_{Z}$ w.r.t $\mathbb{P}^T_{W}$ is given by:
\begin{equation}\label{eq:R-N-log}
\frac{d\mathbb{P}^T_{Z}}{d\mathbb{P}^T_{W}}
=\exp\left(\int_0^T\frac{H(Z(t))}{\sigma^2}dZ(t)-\frac{1}{2}\int_0^T\frac{H(Z(t))^2}{\sigma^2}dt\right)=:L_T(\vartheta; Z).
\end{equation}

This is a likelihood ratio, and thus the estimator of the maximum likelihood ratio of $\vartheta=(r,\eta)$, $\hat{\vartheta}_{ML}=(\hat{r}_{ML},\hat{\eta}_{ML})$ is obtained by maximizing the right-hand side of \eqref{eq:R-N-log} (in practice, its logarithm) with respect to $r$ and $\eta$ (see Theorem 1.12 in \citep{kutoyants2004statistical}), which is given by:
\begin{equation}\label{eq:ML-ST}
(\hat{r}_{ML}, \hat{\eta}_{ML})=\text{argmax}_{\vartheta\in \mathbb{R}_+\times\mathbb{R}_+} l_T(\vartheta; Z),
\end{equation}
where $l_T(\vartheta; Z)=\log L_T(\vartheta; Z)$.

Alternatively, and analogously to what was done for the CIR and Jacobi processes, based on the stationarity of the process, we can obtain the moment estimators by solving the moment equations:
\begin{equation}\label{eq:mom-eq-gamm}
\frac{u}{v}=\frac{1}{T}\int_{0}^{T}Y(s)ds,\quad \frac{u}{v^2}+\left(\frac{u}{v}\right)^2=\frac{1}{T}\int_{0}^{T}Y(s)^2ds,
\end{equation}
whose solution gives rise:
\begin{equation}\label{eq:u-est-MM}
\hat{u}_{MM}=\hat{v}_{MM}\frac{1}{T}\int_{0}^{T}Y(s)ds,
\end{equation}
and 
\begin{equation}\label{eq:v-est-MM}
\hat{v}_{MM}=\frac{\frac{1}{T}\int_{0}^{T}Y_sds}{\frac{1}{T}\int_{0}^{T}Y(s)^2ds-\left(\frac{1}{T}\int_{0}^{T}Y(s)ds\right)^2},
\end{equation}
where we conclude $\hat{r}_{MM}=\widehat{\sigma^2}(\hat{u}_{MM}+1)/2$ and $\hat{\eta}_{MM}=\widehat{\sigma^2}\hat{v}_{MM}/2$, and which converge to the true parameters as $t\rightarrow\infty$, by ergodicity.

As in the previous case, the maximum likelihood ratio method  will be preferable to the moment method, due to the improvement in efficiency  (see \citep{kutoyants2004statistical}).

\section{Discussion}

\vskip 10pt
This article investigates the meaning of fitness as the predictable motion of living biomass and entities under the light of open system dynamics. To do so, we use the mathematical formalism of the General Theory of Processes, which is a theoretical basis of modern Stochastic Analysis. 

The General Theory of Processes (GTP), originated in the 1970's and 1980's, provides a suitable mathematical representation of history in open system dynamics, giving a consistent classification of predictable processes, stopping times, and a well founded stochastic integration calculus. The GTP representation of an open system dynamics (OSD) is nicely obtained through the notion of semimartingale which admits a decomposition in two parts: a finite-variation process that corresponds to the main system dynamics, and a martingale part (noise) representing the relation of the main system with the environment, with no finite-variation trajectories. This decomposition is not unique, but in physical and biological applications one deals with a rich class of so called \textit{special semimartingales} that have a unique decomposition where the finite-variation part is a predictable process. The noise has no finite-variations, however, its volatility is an increasing process that measures the dissipation of the main-system flow (energy, biomass, information). A finite-variation process is given as a difference of two increasing functions, providing a suitable representation for the birth and death of biomass in biology. Its predictability depends on the history introduced in the stochastic basis that represents the phenomena. 

Here, we first explain our concept of fitness and its volatility in the case of discrete-time evolutions through the concept of predictable projection of a stochastic process. Then, using adequate changes in time and space scales, we analyzed diffusion processes appearing as a limit in the distribution of rescaled discrete-time processes.

We considered the class of homogeneous stochastic differential equations for these processes to explore a central qualitative feature of the dynamics: the existence of equilibrium. This analysis proceeded by recovering Gibbs-type equilibrium probability densities and connecting fitness and its volatility to free energy and the scale function. The resulting characterization, under neutrality \cite{hubbell2011unified}, is that fitness is an affine function of biomass if and only if the equilibrium measure is of Dirichlet type, which is in agreement with previous work on genetics and ecology showing that under neutrality the proportional abundance of species converges to a Beta distribution, in the one dimensional case, and Dirichlet in the multidimensional one \cite{marquet2017proportional, videla2025persistence}. 

In our previous research \cite{videla2025persistence}, we investigated the evolution of biomass proportions on a $d$-dimensional simplex using a geometric analysis based on Shahashahani's metric. That allowed us to define a concept of fitness potential whose Shahashahani gradient represented the main system dynamics in the stochastic differential model. It is well known that Shahashahani's metric is equivalent to Fisher's information for two elements $p,q$ in the simplex. In the current contribution, we go beyond the simplex, recovering the notion of fitness potential through the analysis of equilibrium via the entropy-production and probability-current apparatus (Section 3.2) inspired in Gibbs-type distributions. 

Absolute-abundance models, characterized by a stationary Gamma invariant distribution in Section 4, are well-known  in stochastic population dynamics \cite{lande2003stochastic}, although their treatment through the general predictable/volatility decomposition of Section 2 is, to our knowledge, new.

The equilibrium analysis via entropy production and probability currents also makes an explicit historical connection. The entropy-production functional computed in Section 3.2 (eq. 3.26), and the Gibbs-type equilibrium densities of Section 4.2, sit in the same conceptual lineage as Fisher's Fundamental Theorem of Natural Selection (FTNS), which ties the rate of increase of mean fitness to the additive genetic variance in fitness within a population. Fisher's own motivation for the variance-increase connection was an analogy, informal at the time, to the increase of entropy under Boltzmann's second law, decades before Shannon gave entropy its modern interpretation as an information measure \cite{smith2023beyond}, and a fully rigorous restatement shows the FTNS's fitness variance to be exactly the speed of the population's probability distribution under the Fisher-information metric \cite{baez2021fundamental}. This lineage has since been made considerably more concrete in the forms directly relevant to Sections 3–4. Sella and Hirsh \cite{sella2005application} show that the stationary distribution of a Markov chain for genotype-fixation under selection and drift is exactly of the Gibbs-Boltzmann form, the population size playing the role of the inverse temperature, and construct an explicit Lyapunov function of free-energy-type or ``free fitness” that reduces to FTNS in the infinite-population limit and picks up an explicit entropy (drift) term at a finite population size. Barton and Coe \cite{barton2009application} extended this to continuous allele-frequency diffusions: Wright's stationary distribution is shown to be Boltzmann-distributed, again with population size playing the role of inverse temperature, generalized beyond the FTNS's additive-only scope to arbitrary epistasis and dominance via an additional entropy term counting the volume of allele-frequency configurations consistent with an observed trait. This is the closer mathematical cousin of our own Theorem 2, independently derived in a population-genetics idiom for essentially the same class of diffusion; extending Theorem 2 to accommodate heterogeneous fitness across species is a natural route toward the non-neutral generalization discussed below. A complementary, more dynamical generalization comes from Mustonen and Lässig \cite{mustonen2010fitness}, whose “fitness-flux theorem” extends the FTNS to mutation, genetic drift, and time-dependent selection via a fluctuation-theorem argument, a non-negative average flux rather than a static equilibrium form. Because fitness flux is a probability-current construction rather than an equilibrium-distribution one, it is arguably the closer conceptual analog of our own entropy-production functional (eq. 3.26), which is likewise a current rather than a snapshot; making that correspondence precise, alongside the equilibrium-side correspondence to \cite{barton2009application, sella2005application}, is a natural target for future work. Our $V^p (X)$ remains a distinct object from any of these, pathwise and temporal rather than cross-sectional, and whether a precise formal correspondence exists is an open question we do not resolve here, but the entropy-production formalism already in Section 3.2 seems like the natural place to look for one.

Two interesting properties of our framework are worth noticing. First, the homogeneous diffusion describing the dynamics of relative biomass abundance on the simplex has a remarkable property that we stressed in section 3: under the postulate of the Neutral Theory, fitness is an affine function if and only if the equilibrium measure is of Dirichlet type. And secondly, the space $L^2$ associated with Gibbs type measures has another interesting property: the existence of an orthonormal basis of orthogonal polynomials. As a result, these polynomials play an important role in statistical data analysis. Here, we dedicated section 4 to show three important model examples: the so called Jacobi, Cox-Ingersoll-Ross, and stochastic logistic models.

The main results first show the importance of characterizing fitness as a couple of predictable processes. This allows us to clarify the relationship between the focal or main ecological dynamics and its relation to the environment, represented by a martingale. As a result, our paper stresses the relationship between fitness and its volatility to ensure the existence of an equilibrium in the case of homogeneous dynamics of the biomass. In other words, this result allows for a clearer exploration of persistence. 

Further research will focus on non-homogeneous diffusion models as well as more general dynamics, such as those driven by Lévy processes. 

A recent line of work \cite{smith2023beyond} asks whether any single summary statistic called “fitness” can be dynamically sufficient (sufficient, that is, to predict its own future evolution) once the dynamics of a population depend on the relationships between its members,the mating structure, the stages of life history, interaction networks, frequency dependence, and other components of the total genetic variance not taken into account, and that a fitness function, by construction, discards. The notion of dynamical sufficiency itself originates from Lewontin \cite{lewontin1974genetic}, as a criterion for the adequacy of a statistically inferred model class to predict the dynamics to which it is fitted to; Smith \cite{smith2023beyond} develops this criterion into a formal critique of additive fitness using population genetics with explicit lifecycle transitions and showing that additive fitness statistics generally capture only the partial change of a character, leaving a residual that depends on how relations are reconstructed each generation and that fitness alone cannot represent. We agree with Smith in that this is a problem in current definitions of fitness, one that our proposal sidesteps by working with a general biomass flow, which can be complicated by including interactions and other aspects of the life histories of organisms that ultimately affect birth and death rates in interaction with the environment represented by a martingale or noise, thus fitness becomes a predictable biomass flux and its environment, similar to the notion of fitness as a population-environment couple \cite{abrams2023evolution}. Ours is a general mathematical definition of fitness as a stochastic predictable process and its predictable volatility that can be applied across levels of biological organization, aggregation, and complexity as long as these can show a birth-and-death dynamics.

\vskip 20pt
\textbf{Acknowledgments.} The authors thank Dr. Rodrigo Plaza for kindly providing the simulation and parameter estimation codes for the Jacobi process.

\textbf{Declaration:} The authors confirm the use of artificial intelligence exclusively for bibliographic research and language editing.
\vskip 20pt
\bibliographystyle{plain}
\bibliography{references}
\end{document}